\newtheorem{theorem}{Theorem}
\newtheorem*{theorem*}{Theorem}
\newtheorem{definition}{Definition}[section]
\newtheorem{proposition}[definition]{Proposition}
\newtheorem{remark}[definition]{Remark}
\newtheorem{lemma}[definition]{Lemma}
\newtheorem*{lemma*}{Lemma}
\newtheorem{corollary}[definition]{Corollary}
\newenvironment{preuve}{\noindent {\it Proof}}{\hfill$\square$}
\definecolor{uququq}{rgb}{0.25,0.25,0.25}
\newcommand{\al}{\alpha }
\newcommand{\be}{\beta }
\newcommand{\ga}{\gamma }
\newcommand\de{\delta}
\newcommand\ep{\varepsilon}
\newcommand{\e}{\varepsilon}
\newcommand{\pp}{{\mathbb{P}}}
\newcommand{\E}{{\mathbb{E}}}
\newcommand{\indiq}{{\bf 1}}
\newcommand{\rr}{{\mathbb{R}}} 
\newcommand{\RR}{{\mathbb{R}}} 
\newcommand{\R}{{\mathbb{R}}}
\newcommand{\nn}{{\mathbb{N}}} 
\newcommand{\N}{{\mathbb{N}}}
\newcommand{\cF}{{\mathcal F}}
\newcommand{\cH}{{\mathcal H}}
\newcommand{\cJ}{{\mathcal J}}
\newcommand{\cL}{{\mathcal L}}
\newcommand{\cM}{{\mathcal M}}
\newcommand{\cP}{{\mathcal P}}
\newcommand{\cS}{{\mathcal S}}
\newcommand\calX{\mathcal{X}}
\newcommand\calL{\mathcal{L}}
\newcommand\calP{\mathcal{P}}
\newcommand\rd{\mathrm{d}}
\newcommand{\ds}{\displaystyle}
\newcommand{\ov}{\overline}
\newcommand{\wt}{\widetilde}
\newcommand{\wh}{\widehat}
\def \( {\left(}
\def \) {\right)}
\def \[ {\left\lbrack}
\def \] {\right\rbrack}
\def \lba {\left|}
\def \rba {\right|}
\def \llb {\left\lbrace}
\def \rrb {\right\rbrace}
\def \< {\left\langle}
\def \> {\right\rangle}
\def\sk{\smallskip}
\def\mk{\medskip}
\newcommand\zu{[0,1]}
\newcommand\wn{\widetilde N}
\newcommand{\sm}{{s-}}
\newcommand{\um}{{u-}}
\newcommand\dimh{\dim_{\cH}}
\newcommand{\ala}{\nonumber \\}
\begin{document}

\title[]{Multifractality of jump diffusion processes}
\author[]{Xiaochuan Yang}
\address{Universit\'e Paris-Est, LAMA(UMR8050), UPEMLV, UPEC, CNRS, F-94010 Cr\'eteil, France.}
\address{Dept. Statistics \& Probability, Michigan State University, 48824 East Lansing, MI, USA}
\email[]{yangxi43@stt.msu.edu, xiaochuan.j.yang@gmail.com}
\date{\today}
\keywords{60H10, 60J25, 60J75, 28A80, 28A78: Jump diffusions, Markov processes, stochastic differential equations, Hausdorff dimensions, multifractals}

\begin{abstract} We study the  local regularity and multifractal nature   of the sample paths of jump diffusion processes, which are solutions to a class of stochastic differential equations with jumps. This article extends the recent work of Barral {\it et al.} who constructed a pure jump monotone Markov process with random multifractal spectrum. The class of processes studied here is much larger and exhibits novel features on the extreme values of the  spectrum. This class includes  Bass' stable-like processes and non-degenerate stable-driven SDEs.
\sk

Nous \'etudions la r\'egularit\'e locale et la nature multifractale des trajectoires de diffusion \`a sauts, qui sont solutions d' une classe d'\'equations stochastiques \`a sauts.  Cet article prolonge et étend substantiellement le travail r\'ecent de Barral {\it et al.} qui ont construit un processus de Markov de sauts purs avec un spectre multifractal al\'eatoire.  La classe  consid\'er\'ee est beaucoup plus large et présente de nouveaux  ph\'enom\`enes multifractals notamment sur les valeurs extr\^emes du spectre.  Cette classe comprend les processus de type stable au sens de Bass et  des EDS non d\'eg\'en\'er\'ees  guid\'ees par un processus stable. 
\end{abstract}

\thanks{This work was supported by grants from Région Ile-de-France.}
 
\maketitle

\section{Introduction and main results}

This article concerns the pointwise regularity of the sample paths of Markov processes.  As two fundamental steps, in \cite{jaffard1999levy, barral2010increasing} are considered the class of L\'evy processes and a specific example of pure jump increasing Markov process. 

We investigate here the multifractal structure of a quite general class of one-dimensional Markov processes defined by stochastic differential equations with jumps, called jump diffusions :
\begin{align}\label{jd}
M_t =& M_0 + \int_0^t \sigma(M_{s-}) \,{\rm d} B_s + \int_0^t b(M_s) \,{\rm d}s \nonumber \\
 &+ 
 \int_0^t\int_{|z|\le 1} G(M_{s-},z) \,\wt N({\rm d}s, {\rm d}z) + \int_0^t \int_{|z|>1} F(M_{s-},z)\,N(\rd s, \rd z),
\end{align}
where $B$ is a standard Brownian motion, $N$ is a Poisson random measure with intensity measure $\rd t\, \pi(\rd z)$, $\wt N$ is the associated compensated Poisson measure, $\sigma, b, G, F$ are real valued functions satisfying conditions stated below, see \cite{ikeda1989, situ2005, applebaum2009} for many details on SDEs with jumps.

\smallskip

This work is in the line of a large and recent literature investigating the multifractal nature of "L\'evy-like" processes \cite{jaffard1999levy, barral2007timechange,durand2009singularity, barral2010increasing,durand2012levyfields,balanca2014levy},
and in particular, aims to generalize the recent work \cite{jaffard1999levy} of Jaffard on L\'evy processes to a larger class of Markov processes.  As an attempt beyond the scope of L\'evy processes,  Barral-Fournier-Jaffard-Seuret \cite{barral2010increasing} constructed a specific example of pure jump {\em increasing} Markov process with a random multifractal spectrum.  Namely, they investigated the spectrum of a Markov process characterized by the following jump measure: 
\begin{align*}
\nu_\ga(y,du)= \ga(y)u^{-1-\ga(y)}\indiq_{[0,1]}\rd u
\end{align*} 
where $\ga: \rr\mapsto (0,1)$ is supposed to be Lipschitz-continuous and strictly increasing. Clearly, the specific structure of this process (and in particular the monotonicity of sample paths) simplifies greatly the study of its regularity, and as a consequence, the present work requires more technicality and additional tools.

Let us comment on the major contributions of the present work relative to what has already been achieved in the area and in particular in \cite{barral2010increasing}.
\begin{itemize}
\item  It is the first that treats a quite general class of Markov processes beyond L\'evy processes -- SDE with jumps. The class of processes studied is much larger, and not anymore restricted to a special case of positive increasing jump diffusions with stable-like index in $(0,1)$. In particular, Bass' variable order stable-like processes (with stable-like index in $(0,2)$, thus of infinite variation) and the class of non-degenerate stable-driven SDEs are included, see Example \ref{example1} and \ref{example2}.
\item  A slicing argument is developed to give some technical increments estimates. This new argument does not rely on the monotonicity of the sample paths, thus is applicable to more general SDEs, see Section \ref{upj}.
\item There is a novel discussion on the extreme value of the spectrum, the latter presenting a behavior more complex than the one observed in \cite{barral2010increasing}, see Section \ref{secspec}.
\end{itemize}

\medskip

\subsection{Recalls on multifractal analysis}
Multifractal properties are now identified  as important features of sample paths of stochastic processes.  
The variation of the regularity of stochastic processes has been observed considerably since mid-70's, e.g. fast and slow points of (fractional) Brownian motion \cite{orey1974fast,perkins1983slow, khoshnevisan2000fastforfBm},  "L\'evy-like" processes \cite{jaffard1999levy,barral2007timechange,durand2009singularity, barral2010increasing, durand2012levyfields, balanca2014levy}, SPDEs \cite{perkins1998superbrownian,mytnik2015superstable, balanca2016}, among many other examples. Multifractal analysis  turns out to be a relevant approach to draw a global picture of the distribution of singularities.

Let us recall some relevant notions in our study. The regularity exponent we consider is the pointwise H\"older exponent.  We use Hausdorff dimension, denoted by $\dimh$ with convention $\dimh\emptyset = -\infty$, to study the singularity sets (iso-H\"older sets defined below). See \cite{falconer2003} for more on dimensions and \cite{jaffard2004survey} for many aspects of multifractal analysis.

\begin{definition}
Let $f\in L^{\infty}_{loc}(\rr)$, $x_0\in \R$ and $h \in \R^+\setminus \N^*$. We say that $f$ belongs to $C^{h}(x_0)$  if there exist two positive constants $C$, $\eta$, a polynomial $P$ with degree less than $h$, such that $|f(x)-P(x-x_0)|\le C|x-x_0|^{h}$ when $|x-x_0|<\eta.$ The pointwise H\"older exponent of $f$ at $x_0$ is  $$ H_f(x_0)= \sup\llb h\ge 0: f\in C^{h}(x_0)\rrb . $$
\end{definition}

\smallskip

\begin{definition}
Let $f\in L^{\infty}_{loc}(\rr)$. For $h\geq 0$, the iso-Hölder set of order $h$ is 
\begin{align*}
E_f(h)= \llb x\in\R : H_f(x)=h \rrb
\end{align*}
 and the multifractal spectrum of  $f$ is the mapping $D_f : \R^+ \to [0,1]\cup\{-\infty\}$ defined by
 \begin{align*}
 h\mapsto D_f(h)= \dim_{\cH} E_f(h)
 \end{align*}
We also define, for any open set $A\subset\rr^+$,  the local spectrum  of $f$ on $A$ as 
\begin{equation} \label{lsA}
D_f(A,h) = \dim_{\cH}(A\cap E_f(h)). 
\end{equation}
\end{definition}

Apart from \cite{barral2010increasing},  the aforementioned examples have \emph{homogeneous} multifractal spectra, that is there is no dependency on the region where the spectra are computed: $D_f(\rr^+,h) = D_f(A,h)$, for all open sets $A \in \rr^+$.  The example constructed by Barral {\it et al.} in \cite{barral2010increasing} has multifractal characteristics that change as time passes. It is thus relevant to consider the pointwise multifractal spectrum at a given point. Other examples with varying pointwise spectrum are studied in  \cite{durand2008treeindexedchain, barral2013localformalism,balanca2014levy}.

\begin{definition} Let $f\in L^{\infty}_{loc}(\rr)$, $t_0 \in \rr^+$, and let $B_r$ be an open interval centered at $t_0$ with radius $r>0$. The pointwise multifractal spectrum of $f$ at $t_0$ is the mapping
\begin{equation}
\forall\, h \ge 0, \ \ \ \ D_f (t_0,h) = \lim_{r \to 0 } D_f(B_r,h).\nonumber
\end{equation}
\end{definition}  

As was pointed out in \cite[Definition 3, Lemma 4]{barral2010increasing} (see also  the remark below Definition 4 and Proposition 2 in  \cite{barral2013localformalism}), the pointwise spectrum is well-defined in the sense that it does not depend on the sequence of open intervals chosen, and the local spectrum $D_f(A,h)$ on any open set  $A$ can be completely recovered from the pointwise spectrum. More precisely, for any open set $A\subset\rr_+$ and any $h\ge 0$, we have 
\begin{equation}\label{frompointtolocal}
D_f(A,h)= \sup_{t\in A} D_f(t,h).
\end{equation}
  Thus these two types of results are equivalent and one can pass from one to the other easily. 

Below is a simple fact that will be useful in the derivation of a pointwise spectrum. 

\begin{lemma}\cite[Corollary 1]{barral2013localformalism} \label{scs} The mapping $t\mapsto D_f(t,h)$ is upper semi-continuous.
\end{lemma}

Let us end this subsection with Jaffard's Theorem on the multifractal nature of L\'evy processes \cite{jaffard1999levy}. Recall that \cite[page 126]{applebaum2009} any one dimensional L\'evy process can be written as 
$$Z_t = aB(t) + bt +  \int_0^t\int_{|z|\le 1} z \,\wt N(\rd s,\rd z) + \int_0^t\int_{|z|>1} z \, N(\rd s,\rd z)$$ 
with triplet $(a,b,\pi(\rd z))$ where $\pi$ is the L\'evy (intensity) measure of the Poisson measure $N$ satisfying $\int 1\wedge |z|^2 \,\pi(\rd z)<+\infty$.
Define the Blumenthal-Getoor upper index
\begin{align*}
\be_\pi = \inf\llb \ga\ge 0 :  \int_{|z|\le 1} |z|^\ga \,\pi(\rd z) <+\infty  \rrb .
\end{align*}
Now let $Z$ be a L\'evy process with non trivial Brownian component ($a\neq 0$) and with index $\be_\pi\in(0,2)$. Jaffard  \cite{jaffard1999levy} established: almost surely,  at every $t_0>0$,  the sample path of $Z$ has the (deterministic) pointwise spectrum 
\begin{equation}\label{jaffard}
D_Z(t_0,h)  = D_Z(h) = \begin{cases} \  \beta_\pi h & \mbox { if } h\in [0,1/2),\\  \ \  1 & \mbox{ if } h =1/2,  \\ -\infty  & \mbox { if } h>1/2.\end{cases}
\end{equation}
In particular, L\'evy processes are homogeneously multifractal.

\subsection{Assumptions in this work}
The equation \eqref{jd} is a very general formulation, as it is shown by Çinlar and Jacod \cite{cinlar1981semimartingaleMarkov}. We need some conditions on the coefficients and on the Poisson measure so that we can perform a complete multifractal analysis.

Let us start with the setting.  Let $B$ be a standard Brownian motion and $N(\rd t, \rd z)$ be a Poisson random measure with intensity measure $\rd t \, \pi(\rd z)$, $M_0$ be a random variable with distribution $\mu$ defined independently on the probability space $(\Omega, \cF, \pp)$.  Let $(\cF_t)$ be the minimal augmented filtration associated with $B$, $N$ and $M_0$.  Our process $M$ is the strong solution (see \cite[page 76]{situ2005} for a definition) to  \eqref{jd}.
To make the presentation transparent, we assume throughout the paper
\begin{align*}
\ \ \pi(\rd z) = \rd z/z^2.
\end{align*}
From the definition of the Blumenthal-Getoor index we see that with this measure $\be_\pi=1$. Let us briefly mention the modifications needed to treat more general $\pi(\rd z)$.  Firstly, the auxiliary limsup sets $A_\de$ (defined in Section \ref{sectionproofj}) are in criticality at $\de=\be_\pi$ in the sense that $A_\de\supset \RR^+$ almost surely if $\de<\be_\pi$, and $\RR^+\setminus A_\de\neq\emptyset$ with positive probability if $\de>\be_\pi$. Secondly, one needs to extend Barral-Seuret's localized ubiquity theorem (Theorem \ref{bs} below) to general intensity measures of the underlying Poisson point process. In the author's dissertation \cite[Chapter 3]{yang2016thesis}, the mentioned theorem is generalized to allow for instance singular measures. The first two conditions are the usual growth condition and local Lipschitz condition for the existence and uniqueness of a solution, see  \cite{fu2010positiveSDE, li2011strongsolution}. 

\sk
\noindent (H1) There is a finite constant $K$ such that
\begin{align*}
\forall x\in\rr,\ \ \ \sigma(x)^2 + b(x)^2 + \int_{|z|\le 1} G(x,z)^2 \,\pi(\rd z)  < K(1+x^2).
\end{align*}

\sk
\noindent(H2) For all $m\in\nn^*$, there is a finite  constant $c_m$ such that 
\begin{align*}
\forall\,|x|, |y|\le m, \ \ \  |\sigma(x)-\sigma(y)| + |b(x)-b(y)| \le c_m |x-y|.
\end{align*}

Note that no condition is needed for $F$, because $\pi(\{z: |z|>1 \})<+\infty$, see  \cite[Proposition 4.2]{fu2010positiveSDE}.

\sk

If the diffusion coefficient does not vanish, we assume a non degenerate condition. This is used to deduce the pointwise exponent of the Brownian integral in \eqref{jd}, see Proposition \ref{difexp}.

\sk
\noindent(H3) Either $\inf_x |\sigma(x)|>\e$ for some $\e>0$ or $\sigma\equiv 0$.

\sk
  Finally we assume  

\sk
\noindent(H4)  $G$ is admissible in the sense of Definition \ref{defG}.

\begin{definition}
\label{defG}
A function $G : \rr\times \rr^* \to \rr$ is admissible if it satisfies:
\begin{itemize}
\item (Symmetry) For any $(x,y)\in\rr^2$ and non-zero $|z|\le 1$, $$ G(x,z) = \mathrm{sign}(z)\, |G(x,|z|)|  \ \mbox{ and }  \ G(x,z)G(y,z)>0.$$
\item  (Asymptotically stable-like) There exists a function $\be : \rr\to\rr$ with range in some compact set of $(0,2)$ such that for each $x\in\rr$ , $$\liminf_{z\rightarrow 0} \frac{\ln |G(x,z)|}{\ln |z|} = \frac{1}{ \beta(x)}.$$
 Furthermore, the following one-sided uniform bound holds : for any $\e>0$, there exists $r(\ep) >0 $ such that for any non-zero $|z|\le r(\ep)$ and $x\in\rr$,
$$|G(x,z)|\le |z|^{\frac{1}{\be(x)+\ep}}.$$
\item (Local Lipschitz condition) For each $m\in\nn^*$, there exists a finite constant $c_m$ such that for  $|x|, |y|\le m$  and non-zero $|z|\le 1$,
$$ \left| \frac{\ln |G(y,z)| -\ln |G(x,z)|}{\ln |z|} \right| \le c_m |x-y|.$$
\end{itemize}
\end{definition}

Let us comment (H4). The first item is artificial and used to simplify the statement of the results. If $G$ has different asymptotically stable-like behavior for $z>0$ and $z<0$, one can simply define $\be^+$ and $\be^-$ in the same manner as we define $\be$, and all the results hold with $\be$ replaced by $\be^+\vee\be^-$, see \cite[Chapter 3]{yang2016thesis} for more details on adding asymmetry to the SDE. The second item is clearly technical but general enough in the sense that it allows us to include the important class of stable-like processes with index function ranging in $(0,2)$ and non-degenerate stable-driven SDEs. In the presence of the second one, the third item is stronger than the usual local Lipschitz condition for the pathwise uniqueness of the solution, it is used to give an upper bound for the pointwise H\"older exponent, see the proof of Proposition \ref{jumpexp}.  The reader should keep in mind that when $G$ is admissible, one has intuitively 
$$ G(x,z)\  `` \sim " \ \mathrm{sign}(z)\, |z|^{1/\be(x)} g(x)$$
for some function $\be$ which ranges in $(0,2)$ and some non degenerate   function $g$ with some regularity.


For the rest of the paper, we set
$$
t \in \R^+ \longmapsto \beta_M(t) = \beta(M(t)).
$$

The  quantity   $  \beta_M(t)$ is key: it  shall be understood as {\em the local Blumenthal-Getoor index of   $M$ at time $t$}, and governs the local behavior of $M$ at $t$.

%
%

\subsection{Main results}  
  
We state  now the multifractal  properties of $M$.  When the Brownian part does not vanish, the pointwise spectrum of $M$ takes a simple form, which is the main result of this paper.

 \begin{theorem}\label{mainwithdif}
Assume that (H1)-(H4) hold with non trivial $\sigma$. Then, almost surely, for each $t\in\rr^+$, the pointwise multifractal spectrum of $M$ at $t$ is
\begin{equation}
D_M(t,h) = \begin{cases} h \cdot \max(\beta_M(t),\beta_M(t-)) & \mbox{ if } h<1/2, \\ 1 & \mbox{ if } h=1/2, \\ -\infty & \mbox{ if } h>1/2. \end{cases} \nonumber
\end{equation}
In particular, if $t$ is a continuous time for $M$, the formula reduces to $D_M(t,h) = h \cdot \beta_M(t)$ when $h<1/2$.
\end{theorem}

From the pointwise  spectrum  of $M$ we deduce its local  spectrum, using \eqref{frompointtolocal}.

\begin{corollary}\label{corspec} Under the conditions of Theorem \ref{mainwithdif},  almost surely, for any open set $A\subset \R^+$,
 the local multifractal spectrum of $M$ on $A$  is
\begin{equation}
D_M(A,h) =  \begin{cases}  h\cdot  \sup\Big \{ \beta_M(t) : \, t\in A \Big\} & \mbox{ if } h<1/2, \\ 1 & \mbox{ if } h=1/2, \\
-\infty & \mbox{ if } h>1/2. \end{cases} \nonumber
\end{equation}
 \end{corollary}
 
Observe that both pointwise and local spectrum are linear up to the exponent $h=1/2$. Recalling Jaffard's result (see \eqref{jaffard}), Corollary \ref{corspec} implies that the multifractal spectrum of $M$ looks like that of a Lévy process, except that the slope of the linear part of the spectrum is random and depends on the set on which  we compute the spectrum. This  remarkable property reflects the fact that the local Blumenthal-Getoor index of a jump diffusion $M$  depends on time. 
 
\mk 
 
Now we consider the case when the Brownian part vanishes. 

If the process is locally of bounded variation,   the compensated Poisson integral can be written as the difference of a non compensated Poisson integral and its compensator. Another drift $\wt G(x) = \int_{|z|\le 1} G(x,z)\,\pi(\rd z)$  appears and we need some regularity assumption on $b$ and this new drift:
 \begin{align*}
(\mathrm{H5}) \ \  \begin{cases}
\mbox{ either }    \ \ \  b\in C^2(\rr) \mbox{ and } \inf\{\be(x): x\in\rr\}\ge 1/2, \\
\ \ \mbox{ or }\ \ \ \ \  \sup\{k\in\nn: b \mbox{ and } \wt G \in C^{k}(\rr)\} \ge \sup\{1/\be(x), x\in\rr\}.
\end{cases}
\end{align*}
Let us comment (H5) before presenting the result. As one removes the Brownian part, there is a competition between the regularity of the drift (Lebesgue integral part of $M$), and that of the small jumps component (compensated Poisson integral part of $M$).  The point is that the drift is a functional of $M$ whose regularity is unknown {\it a priori}.  This is not a problem if one imposes more regularity on the drift coefficients.  It is quite similar  to some regularity assumptions on the Hurst function appearing in the study of multifractional Brownian motion. The literature devoted to the study of general mbm \cite{herbin2006, ayache2005multifractional, balanca2015mbm} clearly indicates that equivalent general result on jump diffusions would be much harder to obtain, and therefore out of the scope of the present work.

\begin{figure}
\begin{tikzpicture}[xscale=1.2,yscale=1.2]
{\small 
\draw [->] (0,-0.2) -- (0,3)  node [above] {$D_{M}(t,h)$};
\draw [->] (-0.4,0) -- (3,0) node [right] {$h$};
\draw [dotted] (0,-0.2) -- (0,-0.6);
\draw [fill] (0,-0.6)--(0, -1);
\draw [fill] (0,-0.8) node [left] {$-\infty$};
\draw [fill] (0,-0.8) circle [radius=0.03];
\draw [dotted] (0,1.5)--(1.5,1.5);
\draw [dotted] (1,1.5)--(1,0);
\draw [dotted] (1.5,1.5)--(1.5,0);
\draw [thick] (1,-0.8)--(2.9,-0.8);
\draw [thick, domain=0:1] plot({\x},{1*\x});
\draw [dotted, domain=1:1.5] plot({\x},{1*\x});
\draw [fill] (0,0) circle [radius=0.05];
\draw [fill] (1,1.5) circle [radius=0.05];
\draw [fill] (0,1.5) circle [radius=0.03];
\draw [fill] (1,0) circle [radius=0.03];
\draw [fill] (1,0) node [below] {$\frac{1}{2}$};
\draw [fill] (0,1.5) node [left] {$1$};
\draw [fill] (1.5,0) node [below] {$\frac{1}{\gamma}$};
\draw [->] (0.5, 2)--(0.5,0.9);
\draw [fill] (1,2) node [above] {slope $=\gamma$};
\draw [fill] (1.5,0) circle [radius=0.03];
}
\draw [dotted] (0,-0.8)--(1,-0.8);
\end{tikzpicture}
\begin{tikzpicture}[xscale=1.2, yscale=1.2]
{\small 
\draw [->] (0,-0.2) -- (0,3)  node [above] {$F_{{cont}}(c,\gamma,h)$};
\draw [->] (-0.4,0) -- (3,0) node [right] {$h$};
\draw [dotted] (0,-0.2) -- (0,-0.6);
\draw [fill] (0,-0.6)--(0, -1);
\draw [fill] (0,-0.8) node [left] {$-\infty$};
\draw [fill] (0,-0.8) circle [radius=0.03];
\draw [thick, domain=0:2] plot({\x},{0.75*\x});
\draw [thick, domain=2:3] plot({\x}, {-0.8});
\draw [dotted, domain=0:2] plot({\x}, {-0.8}); 
\draw [dotted] (2,0)--(2,1.5);
\draw [dotted] (0,1.5)--(2,1.5);
\draw [fill] (0,1.5) circle [radius=0.03];
\draw [fill] (0,1.5) node [left] {$1$}; 
\draw [fill] (0,0) circle [radius=0.05];
\draw [fill] (2,0) circle [radius=0.03];
\draw [fill] (2,0) node [below] {${1}/{\gamma}$};
\draw [fill,color=red] (2,1.5) circle [radius=0.05];
\draw [fill,color=blue] (2,0) circle [radius=0.05];
\draw [fill,color=green] (2,-0.8) circle [radius=0.05];
\draw [->] (1,2.5)--(1,1);
\draw [fill] (1.5,2.5) node [above] {slope $=\gamma$};
}
\end{tikzpicture}\begin{tikzpicture}[xscale=1.2, yscale=1.2]
{\small 
\draw [->] (0,-0.2) -- (0,3)  node [above] {$F_{\mbox{jump}}(c_1,c_2,\gamma_1,\gamma_2,h)$};
\draw [->] (-0.4,0) -- (3,0) node [right] {$h$};
\draw [dotted] (0,-0.2) -- (0,-0.6);
\draw [fill] (0,-0.6)--(0, -1);
\draw [fill] (0,-0.8) node [left] {$-\infty$};
\draw [fill] (0,-0.8) circle [radius=0.03];
\draw [thick, domain=0:1, color=black] plot({\x},{1.5*\x});
\draw [thick, domain=1:2, color=black] plot({\x},{0.75*\x});
\draw [dotted, domain=0:1] plot({\x},{0.75*\x});
\draw [thick, domain=2:3] plot({\x}, {-0.8});
\draw [dotted, domain=0:2] plot({\x}, {-0.8}); 
\draw [dotted] (1,0)--(1,1.5);
\draw [dotted] (2,0)--(2,1.5);
\draw [dotted] (0,1.5)--(2,1.5);
\draw [fill] (0,1.5) circle [radius=0.03];
\draw [fill] (0,1.5) node [left] {$1$};
\draw [fill] (0,0) circle [radius=0.05];
\draw [fill] (-0.1,-0.2) node [left] {$0$};
\draw [fill] (1,0) circle [radius=0.03];
\draw [fill] (1,0) node [below] {${1}/{\gamma_1}$};
\draw [fill] (2,0) circle [radius=0.05];
\draw [fill] (2,0) node [below] {${1}/{ \gamma_2}$};
\draw [fill,color=red] (1,1.5) circle [radius=0.05];
\draw [fill,color=green] (1,0.75) circle [radius=0.05];
\draw [fill,color=red] (2,1.5) circle [radius=0.05];
\draw [fill,color=blue] (2,0) circle [radius=0.05];
\draw [fill,color=green] (2,-0.8) circle [radius=0.05]; 
\draw [->] (0.5, 2.5)--(0.5, 1);
\draw [fill] (1,2.5) node [above] {slope $=\gamma_1 $};
\draw [->] (1.5, 2)--(1.5,1.2);
\draw [fill] (2,2) node [above] {slope $= \gamma_2$};
}
\end{tikzpicture}
\caption{Pointwise multifractal spectra of $M$.  Left: $\sigma\not= 0$. Center: $\sigma= 0$ and $t$ is a continuous time. Right: $\sigma= 0$ and $t$ is a jump time. See the statement of Theorem \ref{mainwithdif} and \ref{theo2} for the value of $\ga$, $\ga_1$ and $\ga_2$. The colored points correpond to possible values for the discontinuities of the pointwise spectra.} \label{figo2} 
\end{figure}
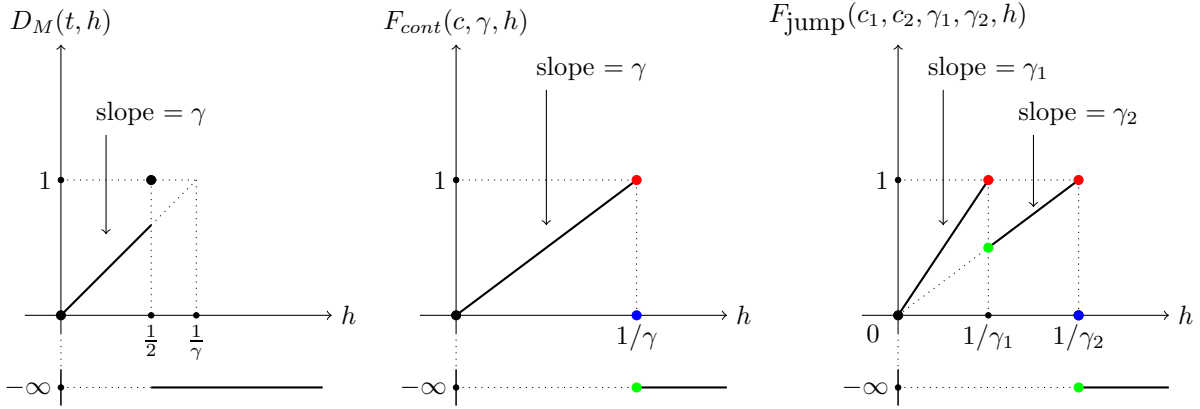

\begin{theorem}\label{theo2}
Assume that (H1)-(H5) hold with $\sigma\equiv 0$. Almost surely, 
 for each $t\ge 0$, the pointwise spectrum is
\begin{align*}
D_M(t,h)= \begin{cases}
h\cdot\max(\be_M(t), \be_M(t-)) & \mbox{ if }  0\le h<1/\max(\be_M(t), \be_M(t-)), \\ 
h\cdot\min(\be_M(t), \be_M(t-)) & \mbox{ if }  1/\max(\be_M(t), \be_M(t-)) < h < 1/\min(\be_M(t), \be_M(t-)), \\
-\infty  & \mbox{ if }  h> 1/\min(\be_M(t), \be_M(t-)).
\end{cases}
\end{align*}
\end{theorem}

The idea is that the pointwise spectrum of $M$ is determined by the local index process $t\mapsto\be_M(t)$. When $t$ is a continuous time for $M$ (so it is for $\be_M$ as $\be$ is continuous by (H4)), the local index does not vary much around $\be_M(t)$,  the resulting spectrum is a linear function with slope $\be_M(t)$ much as in Jaffard's Theorem; when $t$ is a jump time, one has two characteristic index $\be_M(t-)$ and $\be_M(t)$ around $t$, restricting ourselves to $(t-\de, t)$ (resp. $(t,t+\de)$) for $\de>0$ results in a linear function with slope $\be_M(t-)$ (resp. $\be_M(t)$), combining them results in the superposition of two linear functions.  As $\be_M$ ranges in $(0,2)$, only the steeper one of these two linear functions is seen if one adds a point at $(1/2, 1)\in\rr^2$, which corresponds to the $\sigma$ non trivial case.

Note that none of those extreme values of $h$ - discontinuities of the pointwise spectrum - are discussed in this theorem. They 
 will be entirely treated in Theorems   \ref{mainresult} and \ref{mainresult2} of Section \ref{secspec}. It is more complicated to state, since many cases must be distinguished according to various relationships between $t$, $M_t$ and $\be$. In particular, the pointwise spectrum at those particular $h$ might be the right-continuous or left-continuous extension (or neither) of the one obtained in Theorem \ref{theo2}.

The local spectrum can thus be deduced via \eqref{frompointtolocal}, and it is a corollary of  Theorem  \ref{mainresult} and \ref{mainresult2}.

\begin{corollary}\label{corspec2} Assume the conditions of Theorem \ref{theo2}. Let $J$ be the set of jump time of $M$, write $\be_M(J)^{-1}=\{1/\be_M(t): t\in J\}$.  Let $I$ be any open set in $\rr^+$  and
\begin{eqnarray*}
  \gamma_{I}(h) &  :=  &  \sup\Big \{ \beta_M(s) : \, s\in I, \  \beta_M(s) \leq 1/h \Big\} ,\\
  \widetilde\gamma_{I} &  :=  &  \inf\Big \{ \beta_M(s) : \, s\in I \Big\} .
\end{eqnarray*}
With probability one,  the local multifractal spectrum of $M$ on $I$ is
 
\begin{equation}
D_M(I,h) =  \begin{cases} h\cdot   \gamma_{I}(h) 
& \mbox{ if } h<1/ \widetilde\gamma_{I} \ \mbox{ and } h \notin (\beta(J))^{-1}, \\
-\infty & \mbox{ if } h>1/ \widetilde \gamma_I. \end{cases} \nonumber
\end{equation}
 
\end{corollary}

Theorems combined with their corollaries are compared in Figure \ref{figo2} and \ref{fig1}.

The difference between the corollaries follows from the fact that the continuous component of $M$ has regularity 1/2 at every point, so the complicated part of the multifractal spectrum ($h>1/2$) in Corollary \ref{corspec2} disappears (see Figure \ref{fig1}).

Observe that we do not give the value of the spectrum on the countable set $ \big(\beta_M ({J})\big)^{-1}$. This is due to the occurrence of various delicate situations depending on the trajectory of $M$, which  are described in Section \ref{secspec}.

\begin{figure}
\begin{tikzpicture}[xscale=1.3,yscale=1.3]
{\small 
\draw [->] (0,-0.2) -- (0,3)  node [above] {$D_{M}(I,h)$};
\draw [->] (-0.4,0) -- (3,0) node [right] {$h$};
\draw [dotted] (0,-0.2) -- (0,-0.6);
\draw [fill] (0,-0.6)--(0, -1);
\draw [fill] (0,-0.8) node [left] {$-\infty$};
\draw [fill] (0,-0.8) circle [radius=0.03];
\draw [dotted] (0,1.5)--(1.5,1.5);
\draw [dotted] (1,1.5)--(1,0);
\draw [dotted] (1.5,1.5)--(1.5,0);
\draw [thick] (1,-0.8)--(2.9,-0.8);
\draw [thick, domain=0:1] plot({\x},{1*\x});
\draw [dotted, domain=1:1.5] plot({\x},{1*\x});
\draw [fill] (0,0) circle [radius=0.05];
\draw [fill] (1,1.5) circle [radius=0.05];
\draw [fill] (0,1.5) circle [radius=0.03];
\draw [fill] (1,0) circle [radius=0.03];
\draw [fill] (1,0) node [below] {$\frac{1}{2}$};
\draw [fill] (0,1.5) node [left] {$1$};
\draw [fill] (1.5,0) node [below] {$\frac{1}{\gamma_I}$};
\draw [->] (0.5, 2)--(0.5,0.9);
\draw [fill] (1,2) node [above] {slope $=\gamma_I$};
\draw [fill] (1.5,0) circle [radius=0.03];
}
\draw [dotted] (0,-0.8)--(1,-0.8);
\end{tikzpicture}
\begin{tikzpicture}[xscale=1.3, yscale=1.3]
{\small 
\draw [->] (0,-0.2) -- (0,3)  node [above] {$D_{M}(I,h)$};
\draw [->] (-0.4,0) -- (3,0) node [right] {$h$};
\draw [dotted] (0,-0.2) -- (0,-0.6);
\draw [fill] (0,-0.6)--(0, -1);
\draw [fill] (0,-0.8) node [left] {$-\infty$};
\draw [fill] (0,-0.8) circle [radius=0.03];
\draw [thick, domain=0:1.5] plot({\x},{1*\x});
\draw [dotted, domain=0:1.5] plot({\x},{(15/17)*\x});
\draw [thick, domain=1.5:1.7] plot({\x}, {(15/17)*\x});
\draw [dotted, domain=0:1.7] plot({\x},{(15/19)*\x});
\draw [thick, domain=1.7:1.9] plot({\x}, {(15/19)*\x});
\draw [dotted, domain=0:1.9] plot({\x},{(15/20)*\x});
\draw [thick, domain=1.9:2.0] plot({\x}, {(15/20)*\x});
\draw [dotted, domain=0:2] plot({\x},{(15/22)*\x});
\draw [thick, domain=2:2.2] plot({\x}, {(15/22)*\x});
\draw [dotted, domain=0:2.2] plot({\x},{(15/23)*\x});
\draw [thick, domain=2.2:2.3] plot({\x}, {(15/23)*\x});
\draw [dotted, domain=0:2.3] plot({\x},{(15/24)*\x});
\draw [thick, domain=2.3:2.4] plot({\x}, {(15/24)*\x});
\draw [dotted, domain=0:2.4] plot({\x},{(15/25)*\x});
\draw [thick, domain=2.4:2.5] plot({\x}, {(15/25)*\x});
\draw [dotted, domain=0:2.5] plot({\x}, {-0.8}); 
\draw [dotted] (2.5,0)--(2.5,1.5);
\draw [dotted] (0,1.5)--(2.5,1.5);
\draw [thick, domain=2.5:2.9] plot({\x},{-0.8}); 
\draw [fill] (0,1.5) circle [radius=0.03];
\draw [fill] (0,1.5) node [left] {$1$}; 
\draw [fill] (0,0) circle [radius=0.05];
\draw [fill] (1.5,0) circle [radius=0.03];
\draw [fill] (2.5,0) circle [radius=0.03];
\draw [fill] (2.5,0) node [below] {$\frac{1}{\widetilde \gamma_I}$};
\draw [fill] (1.5,0) node [below] {$\frac{1}{\gamma_I}$};
\draw [dotted] (1.5,1.5)--(1.5,0);
\draw [fill] (1,0) circle [radius=0.03];
\draw [fill] (1,0) node [below] {$\frac{1}{2}$};
\draw [->] (0.5, 2)--(0.5,0.9);
\draw [fill] (1,2) node [above] {slope $=\gamma_I$};
\draw [dotted] (1,1.5)--(1,0);
}\end{tikzpicture}
\caption{Local multifractal spectrum of $M$ in the interval $I$ when $\sigma\not\equiv 0$ (left) and $\sigma\equiv 0$ (right). The right figure is a representation, since there is a countable number of small affine parts. When $\sigma\not\equiv 0$, the Brownian integral "hides" the  complicated right part of $D_M(I,.)$.}\label{fig1}
\end{figure}
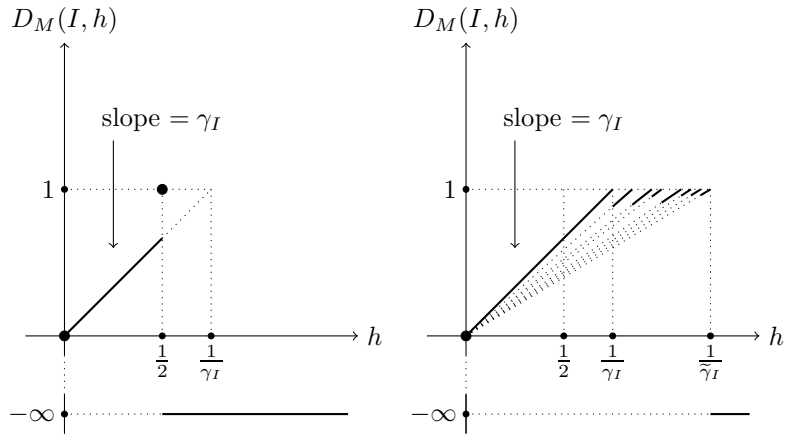

\sk

\subsection{Extensions}\label{sec:extension}
This work is a first step of the long range research project of understanding path regularity of Markov processes. Considered in \cite[Chapter 3]{yang2016thesis} are  multidimensional versions of  similar SDEs with anisotropic $G$ and more general intensity measures $\pi$ under the condition that the associated Poisson point process satisfies some good covering properties.  Other dimensional properties of stochastic processes, such as dimensions of the range, of the graph of $M$, are important mathematical properties with application in physics for modeling purposes, and are investigated in \cite{yang2015range}.  

 Certain classes of Markov processes having a SDE representation are not covered by our main theorems due to the presence of the degenerate coefficients and variable jump rate. This is the case of continuous state branching processes \cite{fu2010positiveSDE} and of positive self-similar Markov processes \cite{doring2012pssmp}. It would be very interesting to determine their multifractal structure. 

In terms of application, a recent original method in model selection of signal processing consists in estimating the parameters of the multifractal spectrum of the model, see \cite{abry2015irregularities}.  Due to the importance of jump diffusion model in physics and finance \cite{chudley1961, amin1993},  it would be of much interest to develop statistical tools to estimate multifractal parameters for SDE with jumps.  

\subsection{Plan of the paper}
In Section 2, first properties of the process $M$ are given. In Section 3, we prove some technical estimates using a new slicing technique.  As a crucial step to derive a multifractal spectrum for $M$,  we state - in Section 4 - Theorem \ref{exponent} on the pointwise H\"older exponent of $M$, whose proof is given in Section 5.   In Section 6, we first compute the pointwise spectrum - Theorem \ref{mainwithdif} - when $\sigma$ is non trivial, and  the linear parts of the pointwise spectrum - Theorem \ref{theo2} - when the Brownian integral vanishes. Then we complete the study by stating  and proving  Theorem \ref{mainresult} and Theorem \ref{mainresult2} which treat the discontinuities of the pointwise spectrum when $\sigma$ is trivial.  Finally, as an application of our main results, we discuss variable order stable-like processes and non-degenerate stable-driven SDEs in Section 7.  Auxiliary results are given  in Appendices, which contain also a discussion on the existence of tangent processes.  Throughout, $C$ denotes a generic finite positive constant whose value may change in each appearance.

\sk

\section{Basic properties of $M$}\label{secadmissible}
Throughout the section, we assume (H1)-(H4).


\begin{proposition}\label{strongsolution}
  The SDE \eqref{jd} has a unique c\`adl\`ag strong solution which is a $(\cF_t)$ strong Markov process. 
\end{proposition}

By an interlacement procedure for the non compensated Poisson integral (see for instance the proof of \cite[Proposition 2.4]{fu2010positiveSDE}), it is enough to consider the SDE \eqref{jd} without non compensated Poisson integral, i.e. $F=0$.  Usual Picard iteration, Gronwall Lemma and localization procedure entail the existence of a unique strong solution for the modified SDE once we check the usual linear growth condition and local Lipschitz condition for the coefficients. By (H1)-(H2), it remains to prove the local Lipschitz condition for $G$, that is,  for each $m\in\nn^*$ there is a finite constant $c_m$ such that for $|x|,|y|\le m$, 
\begin{equation} 
\int_{|z|\le 1} |G(x,z)-G(y,z)|^2 \,\pi(\rd z) \le c_m|x-y|^2. \nonumber 
\end{equation}
  This is checked in Appendix A.  The strong Markov property follows from pathwise uniqueness.
 

\begin{proposition}\label{generator}
The generator of the Markov process $M$ is 
\begin{multline*}
\cL f(x) = b(x)f'(x) + \frac{1}{2}\sigma^2(x)f''(x) 
+ \int_{|z|\le 1} \[ f(x+G(x,z))-f(x)-G(x,z)f'(x)\] \,\pi(\rd z) \\ 
+ \int_{|z|>1} \[ f(x+F(x,z))-f(x)\] \,\pi(\rd z)
\end{multline*}
for any $f\in C^2_c(\rr)$, space of twice continuously differentiable functions with compact support.
\end{proposition}
\begin{proof}
By It\^o's formula for jump processes (see \cite[page 57]{jacod2003}), one has for each $f\in C^2_c(\rr)$ and any initial distribution $\mu$, 
\begin{multline*}
f(M_t) -f(M_0) - \int_0^t \cL f(M_s)\,\rd s \\
= \int_0^t f'(M_{s-})\sigma(M_{\sm}) \,\rd B_s 
  + \int_0^t\int_{|z|\le 1}[f(M_{\sm}+G(M_{\sm},z)) - f(M_{\sm})]\,\wn(\rd s,\rd z) \\
  + \int_0^t\int_{|z|> 1}[f(M_{\sm}+F(M_{\sm},z)) - f(M_{\sm})]\,\wn(\rd s,\rd z).
\end{multline*}
The integrand of the Brownian integral and that of the compensated Poisson integrals are both bounded since $f\in C^2_c(\rr)$ so that the right-hand side of the above equality is a martingale.  As pathwise uniqueness for \eqref{jd} implies uniqueness in law (see \cite[Theorem 1.1]{barczy2015}) which is equivalent to uniqueness of the martingale problem associated with $\cL$ with initial distribution $\mu$, see \cite[Corollary 2.5]{kurtz2011},  we have proved that the generator of $M$ on $C^2_c(\rr)$ is indeed $\cL$. It is easy  to check the condition (14) in \cite{kurtz2011} in order to apply  Corollary 2.5 therein.  We omit the details.
\end{proof}

\sk

{\em  Hereafter, we restrict our study to the time interval $[0,1]$, the extension to $\rr^+$ is straightforward. }

\section{Technical estimates}\label{upj}

Let us provide a uniform in $[0,1]$ increment estimate for the compensated Poisson integral truncated the large jumps for a family of different truncations.  Balança \cite{balanca2014levy}  has proved a similar result for L\'evy processes. To overcome the difficulty that our process does not have stationary increments, we develop a slicing technique which consists in cutting the compensated Poisson integral according to the value of the local index process $t\mapsto \be_M(t)$, somehow \`a la Lebesgue.  Each sliced process has a more or less constant local index for which we are able to obtain precise estimates with approximately right order.  Adding up these slices gives the desired estimates.

We need some notations. Set for any interval $I=[a,b]\subset[0,1]$
\begin{align*}
\be_{M}^{I,n}  = \( \sup_{u\in I} \beta_M(u)+\frac{2}{n} \)  \ \   \mbox{ and } \ \  
\wh{\be}_{M}^{I,n} = \( \sup_{u\in I\pm 2^{-n}}\beta_M(u)+\frac{2}{n}\) \ ,
 \end{align*}
 where $I\pm 2^{-n} = [a-2^{-n}, b+2^{-n}]$.

\begin{proposition}\label{LElemme}
There exists finite positive constants $K$, $\e_0$ such that for each $\de>1$,  $0<\ep<\e_0$, $n\ge n_0$ (depending only on $\ep$ and $G$)
\begin{align*}
\pp \( \sup_{\overset{|s-t|\le 2^{-n}}{s<t\in\zu}} \lba 2^{\frac{n}{\de \left (\widehat{\be}_{M}^{[s,t],n}+\ep \right)}}  \int_s^t\int_{|z|\le 2^{-n/\de}}  \hspace{-2mm} G(M_{\um},z) \,\wn (\rd u,\rd z) \rba \ge 8 n^2 \) \le K e^{-n}.
\end{align*} 
\end{proposition}

In words, if we look at the increment in a small interval $I$ of the compensated Poisson integral truncated the jumps of size larger than $|I|^{1/\de}$ of the underlying Poisson point process,  we observe with high probability that the mentioned increment is bounded above by $|I|^{1/(\de\wh\be^{I,n}_M)}$ with some logarithmic correction. It is remarkable that this statement holds uniformly for all small $I\subset[0,1]$.




The proof is decomposed into several lemmas. The first gives an increment estimate in any dyadic interval with ``either constant ($\approx 2k/n$) or zero" index. 
 Set $I_{n,\ell}= [t_{n,\ell},t_{n,\ell+1}]$ and $t_{n,\ell}= \ell 2^{-n}$.

\begin{lemma}\label{aux1} There exist finite positive constants $K$, $\e_0$ such that for each $\de>1$, $0<\ep<\e_0$, $n\ge n_0$ (depending only on $\e$ and $G$),  and  $\ell\in\{0, \cdots, 2^n-1 \}$, $k \in \{ 0,\cdots, n-1 \}$,
\begin{align}
&\ \ \pp \( \sup_{t\le 2^{-n}} \lba \int_{t_{n,\ell}}^{t_{n,\ell}+t} \int_{|z|\le{2^{-n/{\delta}}}} G(M_{\sm},z) \indiq_{\beta_M(s-)\in \left[\frac{2k}{n},\frac{2k+2}{n}\right)} \,\wn(\rd s,\rd z) \rba \geq 2n 2^{-\frac{n}{\delta(2k+2+n\ep)/n}} \) \le K e^{-2n}.\nonumber
\end{align}
\end{lemma}
Observe that the estimate is the same for any dyadic interval, this is because the underlying process has an almost constant index, which mimics the stationarity of increments of L\'evy processes.
\begin{proof}
Set 
\begin{align*}
H_k(s,z)&:= 2^{\frac{n}{\de(2k+2+n\ep)/n}} G(M_{s-},z) \indiq_{\beta_M(s-)\in \left[\frac{2k}{n},\frac{2k+2}{n}\right)} \indiq_{|z|\le 2^{-n/\de}}, \\
P_k(t) &:= \int_{\ell 2^{-n}}^{\ell 2^{-n}+t} \int_{|z|\le 1} H_k(s,z)\,\wn (\rd s,\rd z). 
\end{align*}  
These processes depend clearly on $\de$, $\e$, $n$ and $\ell$. We ignore them for notational simplicity.  First we check that $P_k$ is a $L^2$ martingale.   It suffices to show  the following (see \cite[Theorem 4.2.3]{applebaum2009})
\begin{align}\label{l2mart}
\forall \,t>0, \ \ \int_{\ell 2^{-n}}^{\ell 2^{-n}+t} \int_{|z|\le 1} \E [H_k(s,z)^2]\,\rd s \, \pi(\rd z) < +\infty .
\end{align}
By the asymptotically stable-like assumption (H4),  for each  $t\ge 0$, one has
\begin{align*}
\int_{t_{n,\ell}}^{t_{n,\ell} +t}\!\!\! \int_{|z|\le 1} \!\!\!\!\E[H_k(s,z)^2] \,\pi(\rd z)\rd s
=\int_{t_{n,\ell}}^{t_{n,\ell} +t} \!\!\!\! 2^{\frac{2n}{\de(2k+2+n\ep)/n}}  \E \[ \indiq_{\beta_M(s-)\in \left[\frac{2k}{n},\frac{2k+2}{n}\right)} \int_{|z|\le 2^{-n/\de}} \!\!\!\!\!\!\!\! G(M_{\sm},z)^2  \,\frac{\rd z}{z^2} \] \rd s 
\end{align*}
which is bounded above by
\begin{align*}
\int_{t_{n,\ell}}^{t_{n,\ell} +t} 2^{\frac{2n}{\de(2k+2+n\ep)/n}}  \E \[ \indiq_{\beta_M(s-)\in \left[\frac{2k}{n},\frac{2k+2}{n}\right)} \int_{|z|\le 2^{-\frac{n}{\de}}}  |z|^{\frac{2}{\beta_M(s-)+\ep/2}-2} \,\rd z \] \rd s 
\end{align*}
for $n\ge n_0$ where $n_0$ depends only on $G$ and $\e$.  Let $\e_0 =\mathrm{dist}(\ov{\be(\rr)}, 2)$. Simple calculus implies that last integral is  bounded above by $C t 2^{n/\de}$ with  $C =(2/(2-\e_0/2) -1)^{-1}$.
Thus one has for all $t\le 2^{-n}$,
\begin{align}\label{const}
\int_{t_{n,\ell}}^{t_{n,\ell} +t} \int_{|z|\le 1} \E [H_k(s,z)^2] \,\pi(\rd z) \rd s \le C2^{-n}2^{n/\de} \le   C, 
\end{align}
which proves that  $(t\mapsto P_k(t))_{t\le 2^{-n}}$ is a $L^2$ martingale.


%

One deduces by convexity and Jensen's inequality that $t\mapsto e^{P_k(t)}$ and $t\mapsto e^{-P_k(t)}$ are submartingales. By Doob's $L^1$ maximal inequality for positive submartingales,
\begin{align*}
 \pp \( \sup_{t\le 2^{-n}} |P_k(t)| \ge 2n \) &\le  \pp \( \sup_{t\le 2^{-n}} e^{P_k(t)} \ge e^{2n} \) + \pp \( \sup_{t\le 2^{-n}} e^{-P_k(t)} \ge e^{2n} \) \\
&\le e^{-2n} \( \E \[ e^{P_k(2^{-n})} \] + \E \[ e^{-P_k(2^{-n})} \] \) . \nonumber
\end{align*}
Now we show that $\E[ e^{P_k(2^{-n})}]$  and  $\E[ e^{-P_k(2^{-n})}]$ are finite and independent of $n$, which completes the proof. It suffices to study $e^{P_k(t)}$. Applying Itô's Formula for jump processes,
\begin{multline}
e^{P_k(t)} = 1 + \int_{t_{n,\ell}}^{t_{n,\ell} +t} \int_{|z|\le 1
} e^{P_k(s-)}\( e^{H_k(s,z)}-1 \) \,\wn(\rd s, \rd z) \\
 + \int_{t_{n,\ell}}^{t_{n,\ell} +t} \int_{|z|\le 1
} e^{P_k({\sm})} \( e^{H_k(s,z)}-1 -H_k(s,z) \) \frac{\rd z}{z^2}\rd s. \label{eqIto}
\end{multline}
For all $r>0$, set $\tau_r = \inf\{t\ge 0 : |P_k(t)|\ge r \}$. Observe that for $s\ge 0$ and $n\ge n_0$,
\begin{align*}
|H_k(s,z)| \le2^{\frac{n}{\de(2k+2+n\ep)/n}} |z|^{\frac{1}{\beta_M(s-)+\ep/2}} \indiq_{|z|\le 2^{-n/\de}} \indiq_{\beta_M(s-)\in \left[\frac{2k}{n},\frac{2k+2}{n}\right)}   \leq  1.
\end{align*}
Thus, using $|e^u-1-u|\le |u|^2$ for $|u|\le 1$ and taking expectation in  \eqref{eqIto} yields that for $t\ge 0$,
\begin{align*}
\E [e^{P_k(t\wedge \tau_r)}] &= 1 + \E \[ \int_{t_{n,\ell}}^{t_{n,\ell} +t\wedge \tau_r} \int_{|z|\le 1
} e^{P_k(s-)} \( e^{H_k(s,z)}-1 -H_k(s,z) \) \,\frac{dz}{z^2}\,\rd s \] \\
&\le 1 + \E \[ \int_{t_{n,\ell}}^{t_{n,\ell} +t\wedge \tau_r}\int_{|z|\le 1
} e^{P_k(\sm)} H_k(s,z)^2 \,\frac{\rd z}{z^2}\,\rd s \] \\
&= 1+ \E \[ \int_{t_{n,\ell}}^{t_{n,\ell} +t\wedge\tau_r} \int_{|z|\le {2^{-n/\de}}} e^{P_k(\sm)} 2^{\frac{2n}{\de(2k+2+n\ep)/n}} \indiq_{\beta_M(\sm)\in \left[\frac{2k}{n},\frac{2k+2}{n}\right)} G(M_{\sm},z)^2 \,\frac{\rd z}{z^2}\,\rd s \] 
\end{align*}
Using (H4) again and calculus shows that the integral inside the expectation is bounded above by 
\begin{align*}
C\int_{t_{n,\ell}}^{t_{n,\ell} +t\wedge\tau_r} e^{P_k(\sm)} 2^{n/\de} \,\rd s
\end{align*}
where $C$ is obtained in \eqref{const}.  Hence, 
\begin{align*}
\E [e^{P_k(t\wedge\tau_r)}] \le 1+ C  \int_{t_{n,\ell}}^{t_{n,\ell} +t} \E [e^{P_k(s\wedge\tau_r)}] 2^{n/\de} \,\rd s.
\end{align*}
Applying Gronwall's inequality, 
one obtains that $$\E [e^{P_k(2^{-n}\wedge\tau_r)}] \le e^{\int_0^{2^{-n}} C2^{n/\de} \,\rd s} \le e^C:= K/2.$$ 
Letting $r\to+\infty$ ends the proof.
\end{proof}
Now we can consider the whole jump process.
\begin{lemma}\label{aux2} Let $K$, $\e_0$ be constants in Lemma \ref{aux1}. For all $\de>1$, $0<\ep<\e_0$,  $n\ge n_0$ (depending only on $\e$ and $G$) and $\ell\in\{0,\cdots,2^n-1\}$, one has
\begin{align*}
\pp \( \sup_{t\in I_{n,\ell}} \lba \int_{\ell 2^{-n}}^{t} \int_{|z|\le 2^{-n/\de}} G(M_{s-},z) \,\wn(\rd s,\rd z)  \rba \ge 2n^2 2^{-\frac{n}{\de\left (\be^{I_{n,\ell},n}_M + \ep\right )}} \) \le K ne^{-2n}.
\end{align*}
\end{lemma}

\begin{proof} It suffices to show this inequality for the first dyadic interval $I:= I_{n,0}$. For other $\ell$, the proof goes along the same lines by an application of Lemma \ref{aux1} for $I_{n,\ell}$.  For each $k\in\{0,\cdots,n-1\}$, write
$$A_k = \llb \sup_{t\le 2^{-n}} \lba \int_0^t\int_{|z|\le 2^{-n/\de}}  G(M_{s-},z) \indiq_{\beta_M(\sm)\in \left[\frac{2k}{n},\frac{2k+2}{n}\right)} \,\wn(\rd s, \rd z) \rba \ge 2n 2^{-\frac{n}{\de \left (\be_M^{I,n}+\ep \right)}} \rrb .$$
Observe that under the event $$\llb \sup_{s\le 2^{-n}}\beta_M(\sm) < \frac{2k}{n} \rrb$$ the compensated Poisson integral in $A_k$ is zero, thus 
\begin{align}
  \pp(A_k)   &= \pp \( A_k\cap \llb \sup_{s\le 2^{-n}}\beta_M(\sm) \ge \frac{2k}{n} \rrb \) = \pp \( A_k\cap \llb \be_M^{I,n} + \ep \ge \frac{2k+2}{n}+\ep \rrb \) \nonumber\\
& 
\le \pp \( \sup_{t\le 2^{-n}} \lba \int_0^t \int_{|z|\le 2^{-n/\de}} G(M_{s-},z) \indiq_{\beta_M(\sm)\in \left[\frac{2k}{n},\frac{2k+2}{n}\right)} \,\wn(\rd s, \rd z) \rba \ge 2n 2^{-\frac{n}{\de(2k+2+n\ep)/n}} \) \nonumber.
\end{align}
Applying Lemma \ref{aux1} for $I$ implies that  $ 
 \pp(A_k) \le K e^{-2n} . $
Finally, using the inclusion 
\begin{eqnarray}
\llb \sup_{t \leq 2^{-n}} \lba \int_0^t \int_{|z|\le 2^{-n/\de}} G(M_{s-},z) \,\wn(\rd s, \rd z)  \rba \ge 2n^2 2^{-\frac{n}{\de \left (\be^{I,n}_M+\ep \right )}} \rrb
\!\!&\subset&\!\! \bigcup_{k=0}^{n-1}  A_k. \nonumber 
\end{eqnarray}
one deduces the desired inequality. 
\end{proof}

Let us end the proof of Proposition \ref{LElemme}, using a classical discretization procedure. 

\sk
\begin{preuve}{\it \ of Proposition \ref{LElemme} : }
For  $s<t$ in the unit interval such that $|s-t|\le 2^{-n}$, there exists $\ell\in\{1, \cdots, 2^n\}$ such that $[s,t]\subset I_{n,\ell-1}\cup I_{n,\ell}:=I'_{n,\ell}$. Then 
$$\wh\be_M^{[s,t],n} \ge \be_M^{I_{n,i},n} \ \ \ \mbox{ for } i= \ell-1, \ell. $$ 
Write $$X_n(t)=\int_0^t\int_{|z|\le 2^{-n/\de}} G(M_{s-}, z) \wn(\rd s, \rd z).$$
- Either $s\in I_{n,\ell}$, then triangle inequality entails
$$|X_n(t)-X_n(s)|\le 2\sup_{u\le 2^{-n}} |X_n({t_{n,\ell}}+u)-X_n(t_{n,\ell})|,$$
- or $s\in I_{n,\ell-1}$, then still by triangle inequality
$$|X_n(t)-X_n(s)|\le \sup_{u\le 2^{-n}} |X_n({t_{n,\ell}}+u)-X_n(t_{n,\ell})| + 2\sup_{u\le 2^{-n}} |X_n({t_{n,\ell-1}}+u)-X_n(t_{n,\ell-1})|. $$ 
In any case, $|X_n(t)-X_n(s)|$ is bounded above by two times the maximal displacement of $X_n$ during $2^{-n}$ unit of time starting from time $t_{n,\ell-1}$ and $t_{n,\ell}$. This implies  the inclusion
\begin{multline*}
 \llb \sup_{\overset{|s-t|\le 2^{-n}}{s<t\in\zu}}   2^{\frac{n}{\de\left  (\widehat{\be}_M^{[s,t], n}+\ep\right)}} \lba \int_s^t\int_{|z|\le {2^{-n/\de}}}  G(M_{\sm},z) \,\wn (\rd u, \rd z) \rba \ge 8 n^2 \rrb \subset \\
 \bigcup_{\ell=0}^{2^n-1} \llb 2^{\frac{m}{\de\left ({\be}_M^{I_{n,\ell}, n}+\ep\right)}} \!\!\!\!\sup_{u \leq 2^{-n}} \lba \int_{t_{n,\ell}}^{t_{n,\ell}+u} \int_{|z|\le 2^{-n/\de}}  G(M_{s-},z) \,\wn(\rd s,\rd z)  \rba \ge 2n^2 \rrb 
\end{multline*}
which, together with Lemma \ref{aux2}, completes the proof. 
 \end{preuve}

\section{H\"older exponent}

In order to compute the multifractal spectrum, we have to investigate the pointwise H\"older exponent of the solution $M$ to \eqref{jd}.  Two situations may occur :
\begin{itemize}
\item $|\sigma|$ is bounded below away from zero.  As it turns out,  the Lebesgue integral in \eqref{jd} is smoother at every point than the Brownian integral  which has constant H\"older exponent.  So the former does not affect the H\"older exponent of $M$. It then suffices to determine the exponent of the compensated Poisson integral.
\item $\sigma$ is identically zero. Without the Brownian integral, the H\"older regularity of $M$ is determined by the rougher one  among the compensated Poisson integral and the Lebesgue integral (drift). As is said in the introduction, the  H\"older exponent of the drift is unknown and out of the scope of this work. However, by imposing more regularity on the coefficients, we show that the compensated Poisson integral dominates the H\"older regularity of $M$.
\end{itemize}
In any case, the non-compensated Poisson integral is not an issue because it is piecewise constant with finite number of jumps in any finite interval.  

\mk

Let us first state a result on the H\"older regularity of the Brownian integral.
\begin{proposition} \label{difexp} Assume that (H1)-(H3) hold with non trivial $\sigma$. Let $M$ be the solution to \eqref{jd} and $C_t = \int_0^t \sigma(M_{s-})\,\rd B_s$.  Then almost surely,  for all $t\in [0,1]$, $H_C(t) = \frac{1}{2}.$
\end{proposition}
By Dambis-Dubins-Swartz theorem, $C$ (which is a continuous martingale) can be written as a Brownian motion subordinated in time. The point here is that the subordinated process is bi-Lipschitz continuous. The Hölder regularity of Brownian motion can thus be inherited by the martingale $C$. This is somewhat classical, we will include a proof for completeness in Appendix B.

\begin{remark} The non degenerate condition on $\sigma$ cannot be dropped. Indeed, when $\sigma(M_t) = 0$, the process $C$ may gain more regularity at $t$ and the computation of $H_{C}(t)$   involves the regularity of $\sigma(M_{\cdot})$ at time $t$.\end{remark}

\mk

To state the main result of this section, we need some notations. 
Define the point system\begin{equation}
 \label{defP}
 \calP=(T_n,Z_n)_{n\ge 0}
 \end{equation} where $(T_n,Z_n)$ is the Poisson point process associated with the Poisson measure $N(\rd t, \rd z)$ so that  
 $$N(\rd t, \rd z)=\sum_{n\ge 1} \delta_{(T_n,Z_n)}(\rd t, \rd z).$$
 We can assume that $(|Z_n|)_{n\in \nn}$ forms a decreasing sequence by rearrangement. By properties of Poisson integral, the set of locations of the jumps  $J= \{ T_n : n\in\nn \}$ and for each $n$,  $\Delta_{T_n} = G(M_{T_n -},Z_n)$ where $\Delta_t := M_t - M_{t-}$. See \cite[Section 2.3]{applebaum2009} for details. 
 
The approximation rate $\de_t$ by $\calP$ describes how close to the jump points $T_n$ a point $t$ is.  Intuitively, the larger $\delta_t$ is, the closer to large jumps $t$ is.
\begin{definition} \label{defsys} 
The approximation rate of $t\in \rr^+$ by $\calP$ is defined by
\begin{eqnarray}
\de_t = \sup \{ \de\ge 1 : |T_n-t|\le |Z_n|^{\de}  \mbox{ for infinitely many } n \} .\nonumber
\end{eqnarray}
\end{definition}

 We can state the main result of this section. The random approximation rate  plays a key role.


\begin{theorem}
\label{exponent} 
Assume that (H1)-(H4) hold. 
\begin{enumerate}
\item  If $|\sigma|$ is bounded below away from zero, then almost surely, $$\forall \, t\notin J, \ \ \ \ H_M(t) = \frac{1}{\de_t\beta_M(t)}\wedge \frac{1}{2}.$$
\item  If  $\sigma$ is identically zero and (H5) holds, then almost surely, $$ \forall \, t\notin J,  \ \ \   H_M(t) = \frac{1}{\de_t\beta_M(t)}.$$
\end{enumerate}
\end{theorem}

\section{Proof of Theorem \ref{exponent}}\label{sectionproofj}

\subsection{Preparations}
 First let us introduce a family of limsup sets that are proved to be relevant in the regularity study of compensated Poisson integral, see \cite{jaffard1999levy}.  Set for each $\de\ge 1$,
\begin{align*}
A_{\de} = \limsup_{n\rightarrow +\infty} B(T_n,|Z_n|^{\de})
\end{align*}
The following covering property for the system $\calP$ of time-space points is well known.

\begin{proposition}\label{cover01}
With probability one, $\zu\subset A_1$.
\end{proposition}
\begin{proof}
Recall that the Poisson measure $N$ has intensity $\rd t\,\pi(\rd z)$ where $\pi(\rd z) = \rd z/z^2$. Using Shepp's theorem \cite{shepp1972randomcovering} (and a integral test by Bertoin \cite{bertoin1994integraltestforcovering}), it suffices to prove that 
$$I=\int_0^1 \exp \( 2\int_t^1 \pi((u,1)) \,du \) dt = +\infty.$$
But $\pi((u,1)) = u^{-1}-1$, so that $I=\int_0^1 e^{2(t-1-\ln t)}dt   = + \infty$.
\end{proof}

It follows that almost surely the approximation rate $\de_t$ of $t$ by the system of points $\cP$ (see Definition \ref{defsys}) is  well-defined,  always greater than or equal to 1, and random because it depends on $N$.  Using merely the definition of $\de_t$ and the covering property of $\cP$, one can obtain an upper bound for the H\"older exponent of the compensated Poisson integral 
$$X_t = \int_0^t\int_{|z|\le 1} G(M_{s-},z) \wn(\rd s, \rd z). $$

\begin{proposition}\label{jumpexp} With probability one, $\forall t\in[0,1]$, $ H_X(t) \le \frac{1}{\beta_M(t)\de_t}$ and $H_M(t) \le \frac{1}{\beta_M(t)\de_t}$.
\end{proposition}

The proof is based on two lemmas. The first is observed by Jaffard \cite[Lemma 1]{jaffard1997old} who found the importance of the dense jumps in the study of local regularity of functions.


\begin{lemma}[\cite{jaffard1997old}]\label{old}
Let $f:\rr\mapsto \rr$ be a c\`adl\`ag function discontinuous on a dense set of points , and let $t\in\rr$. Let $(t_n)_{n\ge 1}$ be a real sequence converging to $t$ such that, at each $t_n$, $|f(t_n) - f(t_n -)| = z_n>0$. Then 
$$H_f(t) \le \liminf_{n\rightarrow +\infty} \frac{\ln z_n}{\ln |t_n - t|}.$$
\end{lemma}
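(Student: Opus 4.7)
The plan is to control the jump size $z_n$ at $t_n$ by bounding how close $f$ stays to a smooth approximating polynomial near $t$, and then transfer this to a logarithmic estimate.

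First I would fix any $h$ with $0 \le h < H_f(t)$. By the definition of the pointwise Hölder exponent, $f \in C^h(t)$, so there exist $C, M > 0$ and a polynomial $P$ of degree less than $h$ such that $|f(x) - P(x-t)| \le C |x-t|^h$ whenever $|x-t| < M$. Since $t_n \to t$, for $n$ large enough we have $|t_n - t| < M$, and moreover $t_n \ne t$ (otherwise $f$ would be continuous at $t$ since the estimate forces $|f(t) - P(0)| = 0$). Applying the estimate at $x = t_n$ gives $|f(t_n) - P(t_n - t)| \le C |t_n - t|^h$. For the left limit, I would take $x \to t_n^-$ in the same inequality: since $P$ is continuous and $|f(x) - P(x-t)|$ is bounded by $C|x-t|^h$ for all $x$ slightly less than $t_n$ (which also satisfy $|x-t|<M$), passing to the limit yields $|f(t_n-) - P(t_n - t)| \le C |t_n - t|^h$.

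Combining the two bounds via the triangle inequality gives
\[
z_n = |f(t_n) - f(t_n-)| \le 2 C\, |t_n - t|^h.
\]
Taking logarithms, $\log z_n \le \log(2C) + h \log |t_n - t|$. For $n$ large we have $|t_n - t| < 1$, so $\log|t_n - t| < 0$, and dividing flips the inequality:
\[
\frac{\log z_n}{\log |t_n - t|} \ge h + \frac{\log(2C)}{\log |t_n - t|}.
\]
Letting $n \to \infty$, the remainder term vanishes since $\log |t_n - t| \to -\infty$, and taking liminf gives $\liminf_{n\to\infty} \frac{\log z_n}{\log|t_n-t|} \ge h$.

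Since the inequality holds for every $h < H_f(t)$, taking the supremum over such $h$ yields $H_f(t) \le \liminf_{n\to\infty} \frac{\log z_n}{\log|t_n - t|}$, which is the desired conclusion. There is no serious obstacle here; the only subtle point is justifying the bound on the left limit $f(t_n-)$, but this follows directly by passing to the limit in the pointwise Hölder inequality, using the continuity of the polynomial $P$ and the fact that the inequality is valid on a whole neighborhood of $t$ rather than merely at specific points.
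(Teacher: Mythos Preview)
Your argument is correct and is the standard proof of this elementary fact. The paper does not actually give a proof of this lemma; it simply attributes the observation to Jaffard and uses it as a black box, so there is no proof in the paper to compare yours against.

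Two minor remarks. First, your justification that $t_n\neq t$ is slightly incomplete as written: the equality $f(t)=P(0)$ alone does not give continuity at $t$; what you really use is that the H\"older estimate, valid on a full neighborhood of $t$, forces $\lim_{x\to t}f(x)=P(0)=f(t)$, which contradicts a jump at $t$. Second, your argument is vacuous when $H_f(t)=0$, since there is then no admissible $h$ with $0\le h<H_f(t)$. In that case the claimed inequality reduces to $\liminf_n \frac{\log z_n}{\log|t_n-t|}\ge 0$, which follows directly from the c\`adl\`ag property: on any compact interval a c\`adl\`ag function has only finitely many jumps of size $\ge\varepsilon$, so $z_n\to 0$, hence $\log z_n<0$ and $\log|t_n-t|<0$ for large $n$. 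Neither point affects the substance of your proof.
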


The second lemma establishes a first link between the pointwise regularity and the approximation rate. 


\begin{lemma}\label{up}
For all $\de\ge 1$, almost surely 
\begin{equation}
  \forall\,t \in A_{\de}, \ \ \ \ \  H_X(t) \le \frac{1}{\beta_M(t)\de}    . \label{upeq}
\end{equation}
\end{lemma}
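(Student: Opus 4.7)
The plan is to apply Lemma \ref{old} to $\calZ$ along a sequence of jump times converging to $t$, then convert the resulting liminf bound into $\frac{1}{\betat\delta}$ via the asymptotic and Lipschitz properties (2.) and (3.) of admissible $G\in\cG$.

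First, fix $\delta\ge1$ and $t\in A_\delta$. By definition of the limsup set there is a (random) subsequence $(n_k)_k$ with $|T_{n_k}-t|\le|Z_{n_k}|^\delta$; as $(|Z_n|)_n$ decreases to $0$, this forces $|Z_{n_k}|\to0$ and $T_{n_k}\to t$. Each $T_{n_k}$ is a jump time of $\calZ$ whose magnitude is $|\Delta\calZ_{T_{n_k}}|=|G(\calM_{T_{n_k}-},Z_{n_k})|>0$, positivity following from $G(x,z)G(y,z)>0$ in (1.) of $\cG$. Since $J$ is a.s.\ dense by Proposition \ref{cover01}, Lemma \ref{old} applies and yields
\begin{equation*}
H_\calZ(t)\ \le\ \liminf_{k\to\infty}\frac{\log|G(\calM_{T_{n_k}-},Z_{n_k})|}{\log|T_{n_k}-t|}.
\end{equation*}

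Next, I would split the ratio by inserting $\log|Z_{n_k}|$:
\begin{equation*}
\frac{\log|G(\calM_{T_{n_k}-},Z_{n_k})|}{\log|T_{n_k}-t|}\ =\ \frac{\log|G(\calM_{T_{n_k}-},Z_{n_k})|/\log|Z_{n_k}|}{\log|T_{n_k}-t|/\log|Z_{n_k}|}.
\end{equation*}
Both logarithms in the second fraction are negative, and $|T_{n_k}-t|\le|Z_{n_k}|^\delta$ gives denominator $\ge\delta$. For the numerator, the Lipschitz bound (3.) of $\cG$ — uniform in $z$ — gives
\begin{equation*}
\left|\frac{\log|G(\calM_{T_{n_k}-},Z_{n_k})|}{\log|Z_{n_k}|}-\frac{\log|G(\calM_t,Z_{n_k})|}{\log|Z_{n_k}|}\right|\ \le\ C|\calM_{T_{n_k}-}-\calM_t|.
\end{equation*}
The right-hand side tends to $0$ by càdlàg continuity of $\calM$ at $t$ (if $t\in J$ then $H_\calZ(t)=0$ and the bound is trivial; otherwise $\calM$ is continuous at $t$ and $\calM_{T_{n_k}-}\to\calM_t$ from either side). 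Applying (2.) at $x=\calM_t$ identifies the limit of the remaining ratio as $1/\betat$, so
\begin{equation*}
\liminf_{k\to\infty}\frac{\log|G(\calM_{T_{n_k}-},Z_{n_k})|}{\log|T_{n_k}-t|}\ \le\ \frac{1/\betat}{\delta}\ =\ \frac{1}{\betat\delta},
\end{equation*}
which combined with Lemma \ref{old} is the desired conclusion; the underlying events are almost sure because they depend only on denseness of $J$ and càdlàg continuity of $\calM$.

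The main obstacle is transferring the pointwise statement (2.) — a property of arbitrary sequences $z\to0$ at the fixed base point $\calM_t$ — into an estimate along the specific Poisson atoms $Z_{n_k}$ prescribed by membership in $A_\delta$. The Lipschitz control (3.) is the crucial ingredient: its uniformity in $z$ lets one replace $\calM_{T_{n_k}-}$ by $\calM_t$ at vanishing cost, reducing everything to the behavior of the single-variable function $z\mapsto\log|G(\calM_t,z)|/\log|z|$ near $0$. A minor additional care is that (2.) is formulated as a $\liminf$: to upper bound the liminf in Lemma \ref{old} by $1/\betat$ one may thin to a sub-subsequence of atoms whose $Z$-values realize the liminf at $x=\calM_t$, which is possible without losing $|T_{n_k}-t|\le|Z_{n_k}|^\delta$ because the $Z_n$'s are a.s.\ dense in $(0,1]$ by the infinite intensity of $N$ near $0$.
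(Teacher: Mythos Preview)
Your main argument is correct and is essentially the paper's proof: both apply Lemma~\ref{old} along the jump times with $|T_{n_k}-t|\le|Z_{n_k}|^\delta$, convert the denominator via this inequality, use the Lipschitz condition~(3.) of $\cG$ to replace $\calM_{T_{n_k}-}$ by $\calM_t$ at vanishing cost, and finish with property~(2.). The only cosmetic difference is that the paper first bounds $\log|T_n-t|$ by $\delta\log|Z_n|$ and then splits the numerator additively, whereas you split the ratio multiplicatively; the content is identical.

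One caveat on your closing paragraph. You correctly flag that~(2.) is stated as a $\liminf$ over $z\to0$, which along the particular atoms $(Z_{n_k})$ a priori only yields $\liminf_k\frac{\log|G(\calM_t,Z_{n_k})|}{\log|Z_{n_k}|}\ge\frac{1}{\betat}$ --- the wrong direction for the bound you need. But your proposed repair does not work: the atoms satisfying $|T_{n_k}-t|\le|Z_{n_k}|^\delta$ form a sparse, $t$-dependent subset of the Poisson cloud, and density (or accumulation at~$0$) of the \emph{full} family $(Z_n)$ says nothing about which $z$-values this particular subsequence visits, so you cannot force it to realize the $\liminf$ at $x=\calM_t$. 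The paper's own proof glosses over exactly the same point; in context, property~(2.) should be read as the existence of the \emph{limit} (consistent with the phrase ``$\liminf\ldots$ exists'' and the heuristic $G(x,z)\sim|z|^{1/\bet(x)}$ in the introduction), under which reading both arguments go through without further work.
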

\begin{proof}
Recall that almost surely the set of jump times is  
$$J = \{T_n : n\in\nn_*\}$$
 and  at $T_n$, the jump size of $X$ is $G(M_{T_n -},Z_n)$.  If $t\in J$, the desired inequality is trivial. Consider $t\in A_\de\setminus J$. Necessarily, $t$ is a continuous time of $M$ and there is an infinite number of $n$ such that 
\begin{equation}\label{balls} |T_n-t|\le |Z_n|^{\de} \end{equation} 
with $|Z_n|$ decreasing to zero.  Lemma \ref{old} applied to the process $X$ with  the jumps satisfying \eqref{balls} and the triangle inequality imply
\begin{align*}
H_X(t) &\le \liminf_{n\to +\infty} \frac{\ln |G(M_{T_n-}, Z_n)|}{\ln |T_n-t|} \\ &\le \liminf_{n\to +\infty} \frac{\ln |G(M_{T_n-}, Z_n)|}{\de \ln |Z_n|} \ala
&\le   \limsup_{n\to +\infty} \frac{-|\ln |G(M_{T_n -},Z_n)| - \ln|G(M_t,Z_n)||}{\de \ln|Z_n|} + \liminf_{n\to +\infty} \frac{\ln |G(M_t, Z_n)|}{\de \ln |Z_n|} 
\end{align*}
By the local Lipschitz condition in (H4), there exists a finite constant $C$ (that depends on the maximum of $M$ in $[0,1]$) such that the first term is bounded above by $(C/\de)\limsup|M({T_n-})-M(t)|$ which is zero by the continuity of $M$ at $t$. The second equals to $1/(\be_M(t)\de)$ by the asymptotically stable-like condition in (H4), as desired.
%
\end{proof}

%

\sk
\begin{preuve}{\it \ of Proposition \ref{jumpexp} : }
It follows from Lemma \ref{up} that a.s., for all rational number $\de\ge 1$, \eqref{upeq} holds. Using the monotonicity of $\de\mapsto A_{\de}$ and the density of rational numbers in $[1,+\infty)$, we deduce that  almost surely \eqref{upeq} holds for all $\de\ge 1$.  Let $t\in [0,1]$, two cases may occur.

If $\de_t<+\infty$,    then  $t\in A_{\de_t-\ep}$, for every $\ep>0$. Hence,  $H_X(t)\le \frac{1}{\beta_M(t)(\de_t-\ep)}$ as a  consequence of Lemma \ref{up}.  Letting $\ep$ tend to $0$, we obtain the result.

If $\de_t = + \infty$, then $  t\in\bigcap_{\de\ge 1} A_\de $, meaning that $t\in B(T_n, |Z_n|^{\de})$  for infinitely many integers $n$, for all $\de\ge 1$. We deduce by Lemma \ref{up} that $H_X(t)\le \frac{1}{\beta_M(t)\de}$, for all $\de\ge 1$, thus $H_X(t)=0$, as desired.  

To deduce that $1/(\de_t\be_M(t))$ is also an upper bound for $H_M(t)$, one simply  remarks that for any $t\notin J$, the approximation rate $\de_t$ is the same for $M$ and $X$.
\end{preuve}

\sk
 We need the following two lemmas, whose proofs are elementary and are left to Appendix A. 
\begin{lemma}  \label{techlemma} Let $f,g : \rr \rightarrow \rr$, ${\ds F(x) = \int_0^{x} f(y)\,dy}$ and $x_0\in\rr$.
\begin{itemize}
\item[(i)] $ H_{F}(x_0)\ge H_f(x_0)+1$. 
\item[(ii)]  If $g\in C^k(\rr)$ with $k=\inf\{\ell\in\nn: H_f(x_0)\le \ell \}$,  then $H_{g\circ f}(x_0)\ge H_f(x_0)$.
\end{itemize}
\end{lemma}

\begin{lemma}\label{techlemma2} Suppose (H4). For any $m\in \nn^*$,  there is a finite constant $c_m$ such that for $x,y\in D_m = \{u\in\rr: |u|\le m \mbox{ and } \be(u) \le 1-1/m \}$. 
\begin{align*}
|\wt G(x)|\le c_m(1+|x|) \ \ \mbox{ and } \ \ \ |\wt G(x) - \wt G(y)|\le c_m |x-y|,
\end{align*}
where $\wt G(x)= \int_{|z|\le 1} G(x,z)\,\pi(\rd z)$.
\end{lemma}

\subsection{An important observation}
We intend to show that the upper bound obtained in Proposition \ref{jumpexp} is  optimal when $\sigma=0$. Let us first describe the configuration of the jumps around a time $t$ outside those limsup sets $(A_\de, \de\ge 1)$. Let $t\notin A_{\de}\cup J$, then there exists a random integer $n_0$, such that 
\begin{equation}\label{eq not A_de}
\forall\, n\ge n_0,\ \ \ |T_n - t|\ge |Z_n|^{\de}.
\end{equation} 
Let $s>t$ sufficiently close to $t$ such that $[t,s]$ does not contain those $T_n$ which violate \eqref{eq not A_de}. It is possible because the cardinality of such $T_n$ is finite.  For each $s$, there exists a unique integer $j$ such that $2^{-j-1}\le |s-t| < 2^{-j}$. Assume that $T_n\in [t,s]$, then  $2^{-j} > |t-s| \ge |T_n - s| \ge |Z_n|^{\de}$, so that $|Z_n| \le 2^{-j/\de}$. This means that in an interval of length $2^{-j}$ with one extreme point in the complement of $A_{\de}\cup J$, there is no jump whose corresponding Poisson jump size larger than $2^{-j/\de}$. Therefore, to consider the increment of the compensated Poisson integral $X$ near such time $t$, one can split the increment of $X$ into two parts:
\begin{equation}\label{decomposition} 
X_s-X_t = 
\int_t^s \int_{|z|\le 2^{-j/\de}} G(M_{\um},z)\,\wn(\rd u, \rd z) + \int_t^s\int_{2^{-j/\de}<|z|\le 1} G(M_{\um},z)\,\wn(\rd u, \rd z). 
\end{equation}
and the second integral is in fact a Lebesgue integral. This decomposition shows why Proposition \ref{LElemme} is so important. 


\subsection{Proof of Theorem \ref{exponent} (ii)}\label{sec5.3}
When the diffusion coefficient is identically zero, 
\begin{equation}\label{jdsansbm}
M_t = M_0+ \int_0^t b(M_u)\,\rd u + \int_0^t \int_{|z|\le 1} G(M_{u-},z) \,\wn(\rd u, \rd z) + \int_0^t \int_{|z|>1} F(M_{s-}, z) N(\rd s, \rd z).
\end{equation} 
Set in the sequel
$$Y_t = \int_0^t b(M_s)\,\rd s \mbox{ and } Y'_t = \int_0^t \wt G(M_s)\,\rd s,$$
recall that $\wt G(x)= \int_{|z|\le 1} G(x,z)\pi(\rd z)$. We distinguish two cases which correspond to different conditions in (H5).

\mk

\subsubsection{First condition in (H5):}\label{subsubsec} $b\in C^2(\rr)$ and $\inf\{\be(x) : x\in \rr\}\ge 1/2$. 

 \sk
Applying Lemma \ref{techlemma} to the Lebesgue integral $Y$ in \eqref{jdsansbm} implies that  a.s.
$$\mbox{ for all t }\in [0,1]\setminus J, \ \ \ H_Y(t) \ge  H_{b\circ M}(t)+1 \ge H_M(t)+1,$$
where we used the assumption $b\in C^2(\rr)$ and the upper bound $H_M(t)\le 1/\be_M(t) \le 2$ ($\be$ is bounded below by $1/2$) obtained in Proposition  \ref{jumpexp}. 
This, together with the fact that the non compensated Poisson integral in \eqref{jdsansbm} is piecewise constant with finite number of jumps in $[0,1]$, entails that a.s.
$$\mbox{ for all t }\in [0,1]\setminus J, \ \ \ H_M(t) =  H_X(t).$$
It remains to show that a.s. for each continuous time $t$ of $M$,  the H\"older exponent of $X$ is $1/(\de_t\beta_M(t))$.   This value is an upper bound for $H_X(t)$ due to Proposition \ref{jumpexp}. To show that it is also a lower bound, it suffices to show 
\begin{equation}\label{enough}
\forall\,\delta>1,\, \forall\, \ep>0, \, \text{ almost surely, } \forall\, t\notin J\cup A_{\de}, \,H_X(t)\ge \frac{1}{\delta(\beta_M(t)+\ep)}. 
\end{equation} 
Indeed, a routine argument by density of rational points and monotonicity of events, together with the definition of approximation rate, entail the sufficiency. 

%
 
\medskip

Now we prove \eqref{enough}. Applying the technical estimate Proposition \ref{LElemme} and Borel-Cantelli lemma,  one obtains that for any $\ep>0$, $\de >1$, almost surely, for all $n$ larger than some $n_0$,
\begin{equation*}
 \sup_{\overset{|s-t|\le 2^{-n}}{s<t\in\zu}} \lba 2^{\frac{n}{\de \left (\widehat{\be}_{M}^{[s,t],n}+\ep/3 \right)}}  \int_s^t\int_{|z|\le 2^{-n/\de}}  \hspace{-2mm} G(M_{\um},z) \,\wn (\rd u,\rd z) \rba \le 8 n^2
\end{equation*}
In particular, for each $t\notin J\cup A_{\de}$ and $s\in B(t,2^{-n_0})$, there is a unique $n\ge n_0$ such that $2^{-n-1} \le |s-t|< 2^{-n}$ and 
\begin{equation}\label{step3eq2}
\lba 2^{\frac{n}{\de \left (\widehat{\be}_{M}^{[s,t],n}+\ep/3 \right) }}  \int_s^t\int_{|z|\le 2^{-n/\de}}  G(M_{\um},z) \,\wn (\rd u,\rd z) \rba \le 8 n^2.
\end{equation}
Using \eqref{step3eq2} and the continuity of $\be_M$ at $t$, the "large" jumps removed increment of $X$
\begin{equation}
\lba  \int_s^t\int_{|z|\le 2^{-n/\de}}  \hspace{-2mm} G(M_{\um},z) \,\wn (\rd u,\rd z) \rba \le |s-t|^{\frac{1}{\delta(\beta_M(t)+\ep/2)}} \( \ln\frac{1}{|s-t|}\) ^2 \label{beta12eq1}
\end{equation}
if $n_0$ is large enough. We enlarge the value of $n_0$ if necessary to ensure this. 

Recalling the discussion in last subsection and the decomposition  \eqref{decomposition}, in the interval $[s,t]$ with $t\notin J\cup A_{\de}$ and $2^{-n-1}\le |s-t|< 2^{-n}$, there is no jump time whose corresponding jump size is larger than $2^{-n/\de}$, namely $N([s,t]\times\{z: 2^{-n/\de}<|z|\le 1\})=0$. Hence, 
\begin{equation}\label{beta12eq2}
 \int_s^t\int_{2^{-n/\de}< |z|\le 1}  G(M_{\um},z) \,\wn (\rd u,\rd z) = -\int_s^t\int_{2^{-n/\de}< |z|\le 1}  G(M_{\um},z) \,\pi(\rd z)\rd u.
\end{equation}

Two situations may occur. 
\begin{enumerate}
\item $\be_M(t)\de\ge 1$. The desired lower bound for $H_X(t)$ is less than $1$, hence, one only needs to consider the constant polynomial in the definition of H\"older exponent. We split the right-hand side integral in \eqref{beta12eq2} into two parts. Using the one-sided uniform bound in the asymptotically stable-like assumption, 
\begin{align}\label{beta12eq3}
\lba \int_{2^{-n/\de}< |z|\le z(\e)}  G(M_{\um},z) \,\pi(\rd z)\rba \le \int_{2^{-n/\de}< |z|\le z(\e)} |z|^{1/(\be_M(u-)+\e/3)} \,\pi(\rd z)
\end{align}
which is bounded above by $\max(c2^{(n/\de)(1-1/(\be_M(t)+\e/2))},1)$ with $c$ a finite constant that depends only on $G$. By Cauchy-Schwarz inequality,
\begin{align}\label{beta12eq4}
\lba \int_{z(\e)<|z|\le 1}  G(M_{\um},z) \,\pi(\rd z)\rba ^2 \le  2 \cdot (z(\e)^{-1}-1) \int_{z(\e)< |z|\le 1} |G(M_{\um},z) |^2 \,\pi(\rd z) 
\end{align}
where the right-hand side integral is bounded above by $C(1+|M_{u-}|^2)\le 2C(1+|M_{t}|^2)$ due to linear growth condition (H1) and the continuity of $M$ at $t$. 
Combining \eqref{beta12eq3}-\eqref{beta12eq4}, one obtains that
\begin{align}\label{beta12eq5}
\lba \int_s^t\int_{2^{-n/\de}< |z|\le 1}  G(M_{\um},z) \,\pi(\rd z)\rd u \rba \le C \max(|s-t|^{1-\frac{1}{\de}+\frac{1}{\de(\be_M(t)+\e/2)}}, |s-t|).
\end{align}
 Using \eqref{beta12eq1}, \eqref{beta12eq2} and \eqref{beta12eq5}  yields 
\begin{align*}
|X_t - X_s|\le C|s-t|^{\frac{1}{\delta(\beta_M(t)+\ep)}} \( \ln\frac{1}{|s-t|}\) ^2
\end{align*}
which proves $H_X(t)\ge 1/(\de(\be_M(t)+ \e))$.
\item $\be_M(t)\de<1$, thus $\be_M(t)+\e<1$ for sufficiently small $\ep$. The desired lower bound for $H_X(t)$ is now a number in $(1,2]$, due to the assumption $\inf\{\be(x): x\in\rr\}\ge 1/2$. To study $H_X(t)$,  one has to subtract a linear polynomial from $X_s$. Using the decomposition \eqref{decomposition} and the observation \eqref{beta12eq2}, the quantity
\begin{align*}
|X_{s}-X_t + (s-t)\wt G(M_t)|
\end{align*}
is bounded above by the sum of 
\begin{align*}
I_1 &= \lba \int_t^{s}\int_{|z|\le 2^{-n/\de}} G(M_{u-},z)\,\wt N(\rd u, \rd z) \rba ,\\
 I_2 &= \lba \int_t^{s}\int_{2^{-n/\de}< |z|\le 1} G(M_{u-},z)\,\pi(\rd z)\rd u - \int_t^{s}\int_{2^{-n/\de}< |z|\le 1} G(M_t,z)\,\pi(\rd z)\rd u \rba , \\
I_3 &= \lba \int_t^{s}\int_{|z|\le 2^{-n/\de}} G(M_t,z)\,\pi(\rd z)\rd u \rba .
\end{align*}
By Lemma \ref{techlemma2}, one has
\begin{align*}
I_2 \le \int_s^t |\wt G(M_{u-}) - \wt G(M_t)| \,\rd u \le C(1+|M_t|)\int_s^t|M_{u}-M_t|\,\rd u. 
\end{align*}
where $C=C(\ep)$ depends on $\ep$. Recall that $1<H_M(t)\le 1/\be_M(t)\le 2$, so there is a polynomial $P$ of degree at most $1$ such that
\begin{align*}
|M_s-P(s-t)| = O(|s-t|^{H_M(t)-\e})
\end{align*} 
as $|s-t|\to 0$.  
\begin{itemize}
\item Either $P$ is of degree zero, then one has $|M_u - M_t| \le C|u-t|^{H_M(t)-\e}$
so that
\begin{align*}
 I_2 \le C(1+|M_t|)|s-t|^{H_M(t)+1-\e},
\end{align*}
\item or $P$ is of degree $1$,  then one has $
|M_u - M_t| = O(|u-t|)$ so that 
\begin{align*}
 I_2 \le C(1+|M_t|)|s-t|^{2}.
\end{align*}
\end{itemize}
Now we bound from above $I_3$. By asymptotically stable-like assumption (one-sided uniform bound), the integral over $z$ inside $I_3$ is bounded above by
\begin{align*}
\int_{|z|\le 2^{-n/\de}} |z|^{1/({\be_M(u-)+\e/2})} \,\pi(\rd z) \le C 2^{-(n/\de)(1/(\be_M(u-)+\e/3)-1)}
\end{align*}
so that 
\begin{align*}
I_3 \le C_2|s-t|^{1+ \frac{1}{\de}(\frac{1}{\be_M(t)+\e/2}-1)} =C |s-t|^{1 - \frac{1}{\de}+ \frac{1}{\de(\be_M(t)+\e/2)}} .
\end{align*}
Using \eqref{beta12eq1} and $H_M(t)=H_X(t)$, together with the above estimates, yields that 
\begin{align*}
|X_{s}-X_t + (s-t)\wt G(M_t)| \le C |s-t|^{\frac{1}{\delta(\beta_M(t)+\ep)}} \( \ln\frac{1}{|s-t|}\) ^2.
\end{align*}
This entails the desired lower bound for $H_X(t)$.
\end{enumerate}

\subsubsection{Second condition in (H5):}  $\sup\{k\in\nn: b \mbox{ and } \wt G \in C^{k}(\rr)\} \ge \sup\{1/\be(x), x\in\rr\}$.

\sk
Suggested by Lemma \ref{techlemma2}, when $\be_M(t)<1$, one should be able to write locally the increment of $X$ as a non-compensated Poisson integral minus its compensator.  We need the following lemma to show this rigorously. 
 \begin{lemma}\label{fivar}
Almost surely, for any $t\in[0,1]$,
\begin{align*}
 \int_0^t\int_{|z|\le 1} |G(M_{u-},z)|\indiq_{\be_M(u-)<1} \,N(\rd u, \rd z) < +\infty.
\end{align*} 
\end{lemma} 
\begin{proof}
It suffices to show that for any $\eta>0$,  a.s.
\begin{align*}
 \int_0^1\int_{|z|\le 1} |G(M_{u-},z)|\indiq_{\be_M(u-)<1-\eta} \, N(\rd u, \rd z)  < +\infty.
\end{align*}
Define the stopping times $\tau_r = \inf\{t\ge 0: |M_t|>r \}$. Observe that a.s. $\lim_{r\to\infty}\tau_r\to\infty$. One has 
\begin{align*}
\E \left[ \int_0^{1\wedge\tau_r}\int_{|z|\le 1}  |G(M_{u-},z)| \indiq_{\be_M(u-)<1-\eta} \,\pi(\rd z)\rd u \right] <+\infty.
\end{align*}
Indeed, one uses the one-sided uniform bound in (H4) and Lemma \ref{techlemma2} to bound from above the integral on the domain $\{z: |z|\le z(\e)\}$, then Cauchy-Schwartz  inequality and linear growth condition to bound the integral on the domain $\{z: z(\e)<|z|\le 1\}$.  Therefore, a.s. for all rational $r\ge 0$,
\begin{align*}
\int_0^{1\wedge\tau_r}\int_{|z|\le 1}  |G(M_{u-},z)| \indiq_{\be_M(u-)<1-\eta} \, N(\rd u, \rd z) < +\infty
\end{align*}
which entails the result.
\end{proof}

Now consider $t\in[0,1]\setminus J$. If $\be_M(t)<1$, then for any $s$ in a small neighborhood of $t$, $\be_M(s)<1$, so that
\begin{align*}
\int_s^t\int_{|z|\le 1} G(M_{u-},z)\,\wt N(\rd u, \rd z) = \int_s^t\int_{|z|\le 1} G(M_{u-},z)\indiq_{\be_M(u-)<1}\,\wt N(\rd u, \rd z)
\end{align*}
which, by Lemma \ref{fivar} and $\sup\{\be_M(u): u\in[s,t]\}<1$, is   
\begin{align*}
\int_s^t\int_{|z|\le 1} G(M_{u-},z)\,N(\rd u, \rd z) - \int_s^t\int_{|z|\le 1} G(M_{u-},z)\,\pi(\rd z)\rd u
\end{align*}
and both integrals are finite. We thus have established another representation for $M$ around $t$ :
\begin{align*}
M_t - M_s = (X'_t - X'_s) - (Y'_t-Y'_s) + (Y_t - Y_s)
\end{align*}
where 
\begin{align*}
X'_s= \int_0^s \int_{|z|\le 1} G(M_{u-},z)\,N(\rd u, \rd z).
\end{align*}
An application of Lemma \ref{techlemma} and the regularity assumption on $\wt G$ yields that almost surely,
\begin{align*}
\mbox{ for all } t\notin J \mbox{ with } \be_M(t)<1, \ \ \  H_M(t) = H_{X'}(t).
\end{align*}
We proceed to show that almost surely, 
\begin{equation}\label{expX'}
\mbox{ for all } t\notin J \mbox{ with } \be_M(t)<1, \ \ \  H_{X'}(t)=\frac{1}{\de_t\be_M(t)}.
\end{equation}
Let us stress the fact that no regularity assumption on $b$ or $\wt G$ is needed to show this. Following the same lines in the proof of Proposition \ref{jumpexp} (since $X$ and $X'$ are constructed using the same Poisson time-space points), one has almost surely 
\begin{align*}
\mbox{ for all } t\notin J \mbox{ with } \be_M(t)<1,  \ \ \ H_{X'}(t)\le \frac{1}{\de_t\be_M(t)}.
\end{align*}
It remains to show \eqref{enough} with $X$ replaced by $X'$. For all $s$ that is sufficiently close to $t$, there is a unique $n$ such that $2^{-n-1}\le |s-t| < 2^{-n}$.
Applying \eqref{beta12eq2}, one has
\begin{align*}
|X'_t-X'_s| \le \lba  \int_s^t\int_{|z|\le 2^{-n/\de}}  \hspace{-2mm} G(M_{\um},z) \,\wn (\rd u,\rd z) \rba + \lba  \int_s^t\int_{|z|\le 2^{-n/\de}}  \hspace{-2mm} G(M_{\um},z) \,\pi(\rd z)\rd u \rba .
\end{align*}
One uses \eqref{beta12eq1} and the estimate for $I_3$ in the last subsection to conclude that $H_{X'}(t)\ge 1/(\de+\be_M(t)+\e)$, as desired.

If $\be_M(t)\ge 1$. One has to use   $H_M(t)=H_X(t)$, Proposition \ref{jumpexp} and show \eqref{enough} for $X$. The desired lower bound is now less than one, hence it is enough to consider the increments of $X$.  Repeating the same lines as in the Section \ref{subsubsec}  entails the desired lower bound for $H_X(t)$. 

\subsection{Proof of Theorem \ref{exponent} (i)} \label{sectionproofjd}
When a non-degenerate Brownian integral exists, $M_t$ is the sum of $M_0$, $C_t$, $Y_t$, $X_t$ and the non-compensated Poisson integral in \eqref{jd}, recalling that $C_t = \int_0^t \sigma(M_{u-})\rd B_u$ and $Y_t$ is a Lebesgue integral. No regularity assumption is needed because any appearing drift (Lebesgue integral) is smoother than the Brownian integral which is only H\"older continuous.

\sk
We intend to show that almost surely, $\forall\, t\notin J$, $H_M(t) = \min(\frac{1}{\delta_t\beta_M(t)}, \frac{1}{2})$.  The following trivial fact is useful for our purpose. 
\begin{lemma}\label{trivial}
For any locally bounded $f,g:\rr\to\rr$ and $t\in\rr$,
\begin{align*}
H_{f+g}(t)\ge \min(H_f(t),H_g(t)) 
\end{align*}
where the equality occurs if $H_f(t)\neq H_g(t)$.  
\end{lemma}

\sk
Let $t\notin J$, then the non-compensated Poisson integral is locally constant around $t$.  As before, we distinguish two situations.
\begin{enumerate}
\item $\be_M(t)< 1$.  Then for any $s$ in a small neighborhood of $t$, 
\begin{align*}
M_s - M_t = (C_s - C_t) + (Y_s - Y_t) - (Y'_s-Y'_t) + (X'_s - X'_t),
\end{align*}
with $Y',X'$ defined in Section \ref{sec5.3}. Combining Proposition \ref{difexp},  Lemma \ref{techlemma}-(i) and Lemma \ref{trivial} yields that $H_{C+Y+Y'}(t) = 1/2$.  Meanwhile, $H_{X'}(t)=1/(\de_t\be_M(t))$ by \eqref{expX'}.  A further application of Lemma \ref{trivial} shows that $H_M(t) = \min(1/(\de_t\be_M(t)), 1/2)$ as soon as $1/2\neq 1/(\de_t\be_M(t))$. When they are equal,  the minimum  $1/(\de_t\be_M(t))$ is a lower bound for $H_M(t)$ by Lemma \ref{trivial}, it is also an upper bound by Proposition \ref{jumpexp}, which ends the proof.
\sk
\item $\be_M(t)\ge 1$. Still by Proposition \ref{difexp}, Lemma \ref{techlemma}-(i) and Lemma \ref{trivial}, one has $H_{C+Y}(t)=1/2$.  Observe that \eqref{enough} is proved when $\be_M(t)\ge 1$ (necessarily $\be_M(t)\de\ge 1$) without regularity assumption on $\wt G$.  So $H_X(t) = 1/(\de_t\be_M(t))$ follows by Proposition \ref{jumpexp}.  The rest of the proof repeats the arguments in the last paragraph. 
\end{enumerate}

\section{Computation of the pointwise multifractal spectrum} 
\label{secspec}
In this section, we compute the pointwise spectrum of  $M$ in all possible settings, i.e. Theorem \ref{mainwithdif} for jumps with diffusion ($\sigma\not\equiv 0$) and Theorems \ref{theo2}, \ref{mainresult}, \ref{mainresult2} for   jumps without diffusion ($\sigma\equiv 0$).  The main tool comes from geometric measure theory, the so-called ubiquity theorem, which consists in determining the Hausdorff dimension of some limsup sets. This theory   finds its  origin in Diophantine approximation and the localized version developed by Barral and Seuret \cite[Theorem 1.7]{barral2011localized} (see also \cite[Section 6]{barral2010increasing}) is very useful in studying random objects with varying pointwise spectra. Let us recall this theorem. 
 
 \sk
\begin{theorem}[\cite{barral2011localized, barral2010increasing}] \label{bs}
Let $\cS$ be a Poisson point process with intensity $\rd t\,\rd z/z^2$. Let  $I=(a,b)\subset\zu$ and $f:I \to [1,+\infty)$ be c\`adl\`ag whose set of jumps is denoted by $\mathfrak{C}$. 
Consider the sets
\begin{equation*}
S(I,f)  =   \left\{t\in I: \ \delta_t \geq f(t) \right\}
\;\;\hbox{ and }\;\;
\widetilde S(I,f)  =  \left\{t\in I: \ \delta_t  =  f(t) \right\},
\end{equation*}
where $\de_t$ is the approximation rate of $t$ by the point system $\cS$.  Almost surely for any $I=(a,b)\subset[0,1]$ and any c\`adl\`ag function $f : I\to \rr$,
\begin{equation*}
\dim_{\cH} S(I,f)= \dim_{\cH}\widetilde S(I,f)  =  \sup \{1/f(t): t\in I\backslash 
\mathfrak{C} \}.
\end{equation*}
\end{theorem}

\subsection{Proof of Theorem  \ref{mainwithdif}: Pointwise spectrum when $\sigma \not\equiv 0$}

\
  
When the Brownian integral does exist, the computation of the pointwise spectrum is easier relative to the Brownian integral vanishing case. Let $t\in(0,1)$ and  
  $$I^n_t :=\left( t-\frac{1}{n},t+\frac{1}{n}\right) \cap (0,1).$$

\begin{itemize}
\item If $h>1/2$, then   $D_M(t,h)=-\infty$ by item 1. of Theorem \ref{exponent}.
\item If $h<1/2$, then 
\begin{eqnarray}
E_M(h)\cap I^n_t &=& \llb s\in I^n_t : h= \frac{1}{\de_s\beta_M(s)}\wedge \frac{1}{2} \rrb \ala &=& \llb s\in I^n_t : h= \frac{1}{\de_s\beta_M(s)}\rrb = \llb s\in I^n_t : \de_s= \frac{1}{h\beta_M(s)}\rrb .\nonumber
\end{eqnarray}
But $\sup\{\be(x): x\in\rr\}<2$ so  that $\frac{1}{h\beta_M(s)}>1$ for any $s\in I^n_t$. This  yields that $\dim_{\cH} (E_M(h)\cap I^n_t) = \sup \llb h\beta_M(s) : s\in I^n_t \rrb$  by  Theorem \ref{bs}. Hence 
\begin{equation*} 
D_M(t,h) = \lim_{n\rightarrow +\infty} \dim_{\cH} (E_M(h)\cap I^n_t)  = h\cdot(\beta_M(t)\vee\beta_M(t-)).  
\end{equation*}
\item Consider finally $h=1/2$. For each $0\le h'<1/2$, set $\wt{E}_M({h'})= \llb s\in[0,1]: \de_s\ge \frac{1}{{h'}\beta_M(s)} \rrb $ which contains $E_M(h')$. By Theorem \ref{bs}, almost surely, 
\begin{equation} \label{equal} \dim_{\cH}  E_M({h'})  = \dim_{\cH} \wt{E}_M({h'})  \mbox{ for all } 0\le {h'}<1/2. \end{equation}
 Now decompose \begin{align*} I^n_t &= \( \bigcup_{{h'}<1/2} \( E_M({h'})\cap I^n_t \) \) \bigcup \( E_M(1/2)\cap I^n_t \) 
 \end{align*}
 Using the inclusion $E_M(h')\subset \wt E_M(h')$, the monotonicity of the sets $\{\wt E_M(h), 0\le h'<1/2\}$ and  \eqref{equal}, one has 
\begin{eqnarray}
1=\dim_{\cH}(I^n_t)&\le& 
 \( \lim_{{h'}\uparrow 1/2} \dim_{\cH} \( \widetilde{E}_M({h'}) \cap I^n_t \) \) \vee\left(   \dim_{\cH}  E_M({1/2}) \cap I^n_t  \right) \ala
&=& \( \sup \llb \beta_M(s) : s\in I^n_t \rrb /2\) \vee  (\dim_{\cH}  E_M({1/2}) \cap I^n_t). \nonumber
\end{eqnarray} 
But $\sup\{\be(x): x\in\rr\}<2$ so that the above inequality shows $\dim_{\cH}  E_M({1/2}) \cap I^n_t =1$, which yields $D_M(t,1/2) = 1$.
\end{itemize}

\subsection{Proof of Theorem \ref{theo2}: Linear parts of the pointwise spectrum when $\sigma\equiv 0$}

\

   We only prove the result for $t\in J$ and we treat separately three linear parts, the third being the constant $-\infty$ part of the spectrum.  The proof is simpler when $t$ is a continuous time for $M$, since in such case $\be_M(t)=\be_M(t-)$.   Set in the sequel
   $$\be_*(t) =  \min(\be_M(t), \be_M(t-)) \mbox{ and } \be^*(t) = \max(\be_M(t), \be_M(t-)).$$
  We also need the following notations  for a jump time $t$,
  $$I^*(t,n)= \begin{cases} \left(t,t+\frac{1}{n} \right), & \mbox{ if } \beta_M(t)= \be^*(t), \\
\left (t-\frac{1}{n},t\right ), & \mbox{ otherwise.}
\end{cases} $$
$$ I_*(t,n) = \begin{cases} \left(t,t+\frac{1}{n} \right), & \mbox{ if } \beta_M(t)=\be_*(t), \\
\left (t-\frac{1}{n},t\right ), & \mbox{ otherwise.}
\end{cases} $$
Clearly, $I^n_t = I^*(t,n)\cup \{t\}\cup I_*(t,n)$ and the union is disjoint. 

\begin{itemize}
\item If $h< 1/\be^*(t)$, there exists $\ep>0$ such that $h<1/({\beta^*(t)+\ep})$. But when $n$ is large enough, for any $ s\in I^n_t$, $\beta_M(s)<\beta^*(t)+\ep/2$ by the càdlàg property of the sample paths, which implies  
$$\frac{1}{h\beta_M(s)}>\frac{\beta^*(t)+\ep}{\beta^*(t)+\ep/2}>1$$
for all $s\in I_t^n$. Theorem \ref{bs} implies that 
\begin{eqnarray}
\dim_{\cH} E_M(h)\cap I^n_t  &=& \dim_{\cH} \llb s\in I^n_t : \de_s=\frac{1}{h\beta_M(s)} \rrb \ala &=& \sup \llb h\beta_M(s) : s\in I^n_t \rrb =h\cdot \sup\{\beta_M(s) : s\in I^n_t \}. \nonumber
\end{eqnarray}
for large $n$, which yields $D_M(t,h) = \ds{\lim_{n\rightarrow +\infty}} \sup \llb h\beta_M(s) : s\in I^n_t \rrb = h\cdot \beta^*(t)$.
\item If $h\in (1/\be^*(t), 1/\be_*(t))$, there exists  $\ep>0$ so that $h$ belongs to $ (1/(\be^*(t)-\e), 1/(\be_*(t)+\e))$. Let us consider separately $I^*(t,n)$ and $I_*(t,n)$. When $n$ is large enough, for all $s\in I^*(t,n)$, $$\frac{1}{h\beta_M(s)}\le \frac{\beta^*(t)-\ep}{\beta^*(t)-\ep/2}<1$$
by the càdlàg property of the sample paths. Hence $$E_M(h)\cap I^*(t,n) = \llb s\in I^*(t,n): \de_s=\frac{1}{h\beta_M(s)} \rrb$$ is empty, because $\de_s\ge 1$ uniformly a.s. due to Proposition \ref{cover01}.  When $n$ is large enough, for all $s\in I_*(t,n)$, $$\frac{1}{h\beta_M(s)} >  \frac{\beta_*(t)+\ep}{\beta_*(t)+\ep/2}>1$$ still by the c\`adl\`ag property of the sample paths.  Applying  Theorem \ref{bs} implies  that 
$$\dim_{\cH} E_M(h)\cap I^n_{t} = \dim_{\cH} E_M(h)\cap I_*(t,n)  = h\cdot \sup \llb \beta_M(s) : s\in I_*(t,n) \rrb$$
for large $n$.  Letting $n\to+\infty$ entails $D_M(t,h)  = h\cdot \beta_*(t)$.
\item If $h>1/{\beta_*(t)}$, there is  $\ep>0$ so that $h>1/({\beta_{*}(t)-\ep})$. But when $n$ is large enough,  for $s\in I^n_t$,  $\beta_M(s)>\beta_{*}(t)-\ep$, thus 
 $$h>\frac{1}{\beta_M(s)}\ge\frac{1}{\de_s\beta_M(s)} = H_M(s)$$
which yields $E_M(h)\cap I^n_t=\emptyset$. This proves  $D_M(t,h) = -\infty$.
\end{itemize}

\subsection{Statement of the general results for the pointwise spectrum when $\sigma= 0$}
As is said in the introduction, the absence of the Brownian integral reveals many problems at some extreme values of the pointwise spectrum.  Cases that have not been treated yet include
\begin{itemize}
\item $t$ is a continuous time,  $h=1/\be_M(t)$;
\item $t$ is a jump time, $h=1/\be^*(t)$ or $1/\be_*(t)$.
\end{itemize}

For a jump time $t$, the localization to a small neighborhood of $t$  makes essentially two different local behaviors appear,  one is captured by $\be^*(t)$, the other by $\be_*(t)$. When $1/h$ is different from both values,  only one of them is dominant,  as is observed in the proof for  the linear parts of the pointwise spectrum. However, both values will contribute to the computation of pointwise spectrum when $h$ is critical, i.e. $h= 1/\be_*(t)$ or $1/\be^*(t)$.  Further, $\de_t$ and local behaviors of the index process $\be_M$ contribute as well.  This is why we introduce the following notations.

For $t\in J$, we define  $\mathsf{b}^*  : I^*(t,n) \cup\{ t\} \rightarrow \rr$  by
\begin{equation} \mathsf{b}^* (s) =  
\begin{cases} \beta_M(s)& \mbox{ if } s\in I^*(t,n),\\
\beta_*(u)& \mbox{ if } s=t .\end{cases} \nonumber
\end{equation}
The map $ \mathsf{b}^*$ coincides with $\beta_M $ except at $t$ on its domain. 
Similarly, define $\mathsf{b}_*  : I_*(t,n) \cup\{ t\} \rightarrow \rr$ by   $$  \mathsf{b}_*( s)  =  
\begin{cases} \beta_M(s) & \mbox{ if } s\in I_*(t,n) ,\\
\be^*(t) & \mbox{ if } s=t .\end{cases}  $$  
 The maps $\mathsf{b}^*$, $\mathsf{b}_*$ depends clearly on $t$ and $n$, which are omitted for notational simplicity.
 
 We  write $t\in LM( f)$   to mean that $t$ is a strict local minimum for a mapping  $f$, i.e. $f(s) > f(t)$ for $s\neq t$ in a small neighborhood of $t$.

Finally, we introduce two functions $F_{\mbox{cont}}$ and $F_{\mbox{jump}}$ (see Figure \ref{figo2}) which correspond to different cases of the pointwise spectra. 
\begin{itemize}
\item  For a time $t$ where the process is continuous, we  will use
\begin{eqnarray*}
F_{\mbox{cont}} (    c,\gamma, h) =  \begin{cases} \gamma h & \ \mbox{ if } h\in \left[0, {1}/{\gamma}\right),\\    c & \ \mbox{ if } h  =1/\gamma,\\  -\infty& \ \mbox{ otherwise}.\end{cases}
\end{eqnarray*}
There will be only three possible values for $c$ (1, 0 and $-\infty$), that is, $F_{\mbox{cont}}$  is possible to be left continuous ($c=1$), right continuous ($c=-\infty$), or neither ($c=0$) on the discontinuous point $h=1/\be_M(t)$.
\item If  $t$ is a jump time for the process, we will use the function $F_{\mbox{jump}}$
\begin{eqnarray*}
F_{\mbox{jump}} ( c_1,c_2,\gamma_1,\gamma_2 , h ) =  \begin{cases} \gamma_1 \cdot h & \ \mbox{ if } \ds h\in \left [0,  {1}/{\gamma_1 } \right ),\\    c_1 & \ \mbox{ if } h \ds = {1}/{\gamma_1\ },\\ \gamma_2\cdot h & \ \mbox{ if } \ds h\in \left[{1}/{\gamma_1}, {1}/{\gamma_2}\right ),\\    c_2 & \ \mbox{ if } h \ds = {1}/{\gamma_2},\\   -\infty& \ \mbox{ otherwise},\end{cases}
\end{eqnarray*}
when $\gamma_1>\gamma_2$. There will be three possible values for $c_2$ (1, 0 and $-\infty$) and two for $c_1$ ($1$ and $\gamma_2/\gamma_1$).  Note that  $F_{\mbox{jump}}$ is either left continuous ($c_1=1$) or right continuous ($c_1=\ga_2/\ga_1$) on the first discontinuous point $h=1/\ga_1$,  and is possible to be left continuous ($c_2=1$), right continuous ($c_2=-\infty$) or neither ($c_2=0$) on the second discontinuous point $h= 1/\ga_2$.
\end{itemize} 

%

The several cases in the theorems below correspond to assigning a precise value to the discontinuous points of the pointwise spectrum, and various scenarii may occur, depending on the fact that $t$ is or not a strict local minimum for the processes $\be_M$, $\mathsf{b}^*$ and $\mathsf{b}_*$ (defined around $t$ on essentially disjoint domains).  The reader shall keep in mind the following heuristics:

\begin{center}
{\bf if $M$ is continuous at $t$, its pointwise spectrum looks like $F_{\mbox{cont}}$, \\if $t$ is a jump time, the pointwise spectrum looks like $F_{\mbox{jump}}$}. 
\end{center}

\begin{theorem}\label{mainresult} Assume that (H1)-(H5) hold with $\sigma \equiv 0$. 

\begin{enumerate}
\item Almost surely, for every $t\notin J$, 
the pointwise spectrum of $M$ at time $t$  is  given by 
\begin{eqnarray}\label{localspecM}
\hspace{-4mm}D_M(t,h)  &=& \begin{cases} F_{\mbox{cont}}( 1,\beta_M(t),h)  &  \mbox{ if } \  t\notin LM(\beta_M ), \\
F_{\mbox{cont}}(0,\beta_M(t),h)  &  \mbox{ if } \ t\in LM(\beta_M  ) \mbox{ and } \de_t=1, \\ F_{\mbox{cont}}(-\infty,\beta_M(t),h) & \mbox{ if } \ t\in LM(\beta_M  )\mbox{ and } \de_t\neq 1. \end{cases} \nonumber
\end{eqnarray}

\item \noindent       
 Almost surely, for all $t \in J$ and  $t\notin  LM(\mathsf{b}_*) \cup LM(\mathsf{b}^*) $ where $\mathsf{b}^*$ and $\mathsf{b}_*$ are defined locally around $t$, the pointwise spectrum at $t$ is 
   \begin{eqnarray*}
 D_M(t,h) =    F_{\mbox{jump}}(1, 1, \beta_*(t), \beta^*(t)    )  .
 \end{eqnarray*}
\end{enumerate}
\end{theorem}

This theorem covers the most frequent cases, i.e. when $t$ is a continuous time  or $t$ is a jump time and not a strict local minimum for  $\mathsf{b}^*$ and $\mathsf{b}_*$.

Next theorem covers all the "annoying" cases, i.e. when $t$ is a jump time and is a strict local minimum for at least one of the two functions   $\mathsf{b}^*$ and $\mathsf{b}_*$. Observe that this concerns at most a countable number of times.

\begin{theorem}\label{mainresult2} 
Assume that (H1)-(H5) hold with $\sigma \equiv 0$.  Almost surely, for any $t\in J$ that either belongs to $LM(\mathsf{b}^*)$ or $LM(\mathsf{b}_*)$, the following holds.
\begin{enumerate}
\item
If $t\notin LM(\mathsf{b}^*)$ and $t\in LM(\mathsf{b}_*)$, then 
\begin{eqnarray*}
  && \hspace{-14mm}  D_M(t,h) = \\
  &&  \hspace{-14mm} \ \begin{cases}  F_{\mbox{jump}}(1, 0,  \beta^*(t), \beta_*(t)  ,h) & \mbox{ if }  \Delta\beta_M(t)>0 \mbox{ and } \de_t =1, \\  F_{\mbox{jump}}(1, -\infty, \beta^*(t), \beta_*(t)  ,h) \!\!\!  & \mbox{ otherwise. }  
  \end{cases}  
 \end{eqnarray*}

\item
If $t\in LM(\mathsf{b}^*)$ and $t\notin LM(\mathsf{b}_*)$, then 
\begin{align*}
D_M(t,h) =  F_{\mbox{jump}}(\beta_*(t)/\beta^*(t) , 1,  \be_*(t), \be_*(t)   ,  h)
\end{align*}
\item If $t\in LM(\mathsf{b}^*)\cap LM(\mathsf{b}_*)$, then
\begin{eqnarray*}
 && \hspace{-17mm} \ D_M(t,h) = \\
 && \hspace{-21mm} \  
 \begin{cases}   
 F_{\mbox{jump}}(\beta_*(t)/\beta^*(t), 0,  \beta^*(t), \beta_*(t)  ,  h)  & \mbox{ if }    \Delta\beta_M(t)>0,\,  \de_t =1. \\   
 F_{\mbox{jump}}(\beta_*(t)/\beta^*(t) , -\infty,  \beta^*(t), \beta_*(t)   ,  h)  & \mbox{ otherwise. }  
\end{cases} 
\end{eqnarray*}

\end{enumerate}
\end{theorem}

When $t$ is a jump time, the behaviors of $M$ on the right hand-side and on the left hand-side of $t$ may differ a lot. So the pointwise spectrum  reflects the superposition of two local behaviors, which explains the formulas above. Though not easy to read, these formulas   are simple consequences of these complications that may arise as very special cases.


\subsection{Proof of Theorems \ref{mainresult} and \ref{mainresult2}}
\ 

Due to Theorem \ref{theo2}, it remains to prove the above theorems for the points of discontinuities of $F_{\mbox{cont}}$ and $F_{\mbox{jump}}$. We will only give the  proof for the discontinuous points of $F_{\mbox{jump}}$, the proof for  $F_{\mbox{cont}}$ can be written with some simplifications.   

%
%

There are  two points of discontinuities for $F_{\mbox{jump}} $, which are  $1/{\beta^*(t)}$ and $1/{\beta_*(t)}$.

\subsubsection{First discontinuity $h=1/{\beta^*(t)}$}\label{seclm1}  We distinguish two cases.

\sk
\noindent{\bf Case 1 :}  $t\in LM(\mathsf{b}^*)$. This corresponds to items (2)-(3) in Theorem \ref{mainresult2}.  Then $\forall\, s\in I^*(t,n)$, $\frac{1}{h\beta_M(s)}=\frac{\beta^*(t)}{\beta_M(s)}<1$, which implies $E_M(h)\cap I^*(t,n)=\emptyset$. Notice that 
$$E_M(h)\cap I_*(t,n) = \llb s\in I_*(t,n) : \de_s = \frac{\beta^*(t)}{\beta_M(s)} \rrb.$$ 
For every $ s\in I_*(t,n)$ with $n$ large enough, one has $$\frac{\beta^*(t)}{\beta_M(s)}\ge \frac{\beta^*(t)}{\beta_*(t)+|\Delta\be_M(t)|/2} >1.$$
This  ensures that $\dim_{\cH}  E_M(h)\cap I_*(t,n)  = \sup \llb \frac{\beta_M(s)}{\beta^*(t)} : s\in I_*(t,n) \rrb$, still by Theorem \ref{bs}. Therefore, 
$$\dim_{\cH}  E_M(h)\cap I^n_t  = \dim_{\cH}  E_M(h)\cap (I_*(t,n)\cup \{t\})  = \dim_{\cH}  E_M(h)\cap I_*(t,n) ,$$
  which yields $D_M(t,h) = \ds{\lim_{n\rightarrow +\infty}} \dim_{\cH}  E_M(h)\cap I_*(t,n)  = \be_*(t)/\be^*(t).$

\sk
\sk
\noindent{\bf Case 2 : } $t\not\in LM(\mathsf{b}^*)$.  This is related to item (1) in Theorem \ref{mainresult2}. In this case, either $t$ is not a local minimum for $\mathsf{b}^*$, or $\mathsf{b}^*$ is locally constant (which happens with positive probability if $x\mapsto\be(x)$ has an interval of constancy).

 If $t$ is not a local minimum for $\mathsf{b}^*$, one can extract a   monotone  sequence $\{s_k\}\subset I^*(t,n)$ tending to $t$ such that 
\begin{equation}\label{sknotmin}
\be_M({s_k}) <\beta^*(t) \mbox{ and }  \lim_{k\to+\infty} \be_M(s_k) = \be^*(t).
\end{equation}
Since  $\be_M $ is càdlàg and the cardinality of $J$ is at most countable,  we can choose $s_k$  to be continuous times for $\be_M$. Let us first compute the pointwise spectrum of $M$ on times $s_k$ and deduce the result by a regularity restriction of the pointwise spectrum.
Fix $k\geq 1 $ and let $p$ be  large enough. For every $s\in I^p_{s_k}$,  one has $\frac{\beta^*(t)}{\beta_M(s)}>1$ by \eqref{sknotmin}. Further,  Theorem \ref{bs} ensures that $$\dim_{\cH} E_M(h)\cap I^p_{s_k}  = \sup \llb h\beta_M(s) : s\in I^p_{s_k} \rrb,$$  which yields that $D_M(s_k,h) = h\be_M({s_k})$.  Hence $$1\ge D_M(t,h) = \ds{\limsup_{s\rightarrow t}} \,D_M(s,h) \ge \ds{\limsup_{k\rightarrow +\infty}}\, D_M(s_k,h) = h\beta^*(t) = 1$$
where we used Lemma \ref{scs} in the first equality.  

If $\mathsf{b}^*$ is locally constant equal to $\be^*(t)$ in its domain,  then for $n$ large enough, one has $E_M(h)\cap I^*(t,n) = \{ s\in I^*(t,n) :  \de_s = \frac{1}{h\beta^*(t)} = 1\}$.  By Theorem \ref{bs} applied to the constant function $f(x)\equiv 1$, this set has Hausdorff dimension one, thus $ D_M(t,h) = 1 $.
%

\subsubsection{Second discontinuity $h=1/{\beta_*(t)}$}
 As before, we  distinguish two cases. 

\sk
\noindent{\bf Case 1 : } $t\in LM(\mathsf{b}_*)$. This is related to items (1) and (3) in Theorem \ref{mainresult2}.  For all $s\in I_*(t,n)$ with $n$ large enough,  one has $ \frac{\beta_*(t)}{\beta_M(s)}<1$, which implies that $E_M(h)\cap I_*(t,n)=\emptyset$. Notice that $E_M(h)\cap I^*(t,n) = \llb s\in I^*(t,n) : \de_s = \frac{\beta_*(t)}{\beta_M(s)} \rrb$ and that $\forall\, s\in I^*(t,n)$ with large $n$, $$\frac{\be_*(t)}{\beta_M(s)}< \frac{\be_*(t)}{\be_*(t)-|\Delta\be_M(t)|/2}<1.$$
One deduces $E_M(h)\cap I^*(t,n) = \emptyset$ for all $n$ large  enough. But
\begin{equation}
E_M(h)\cap \{t\} =\begin{cases} \{ t \} & \text{ if } \beta_M(t-)>\beta_M(t) \text{ and } \de_t = 1, \\ \emptyset & \text{ otherwise.} 
\end{cases} \nonumber
\end{equation}
Hence,  for all large $n$,
\begin{equation}
\dim_{\cH}  E_M(h)\cap I^n_t  = \dim_{\cH} E_M(h)\cap\{t\}  = \begin{cases}  \ \  0 & \mbox{ if } \beta_M(t-)>\beta_M(t) \mbox{ and } \de_t = 1, \\ -\infty & \text{ otherwise,} \end{cases} \nonumber 
\end{equation}
which  yields 
\begin{equation}
D_M(t,h) = \begin{cases}  \ \ 0 & \text{ if } \beta_M(t-)>\beta_M(t)  \text{ and } \de_t = 1, \\ -\infty &\text{ otherwise.} \end{cases} \nonumber
\end{equation}

\sk
\noindent{\bf Case 2 : } $t\not\in LM(\mathsf{b}_*)$. This corresponds to item (2) in Theorem \ref{mainresult2}. In such case, either $t$ is not a local minimum for $\mathsf{b}_*$, or $\mathsf{b}_*$ is locally constant in its domain around $t$. If $t$ is not a local minimum for $\mathsf{b}_*$. By a similar argument as in the second case in the Section \ref{seclm1}, we can prove that $D_M(s_k, h) = h\be_M({s_k})$ where $\{s_k\}\subset I_*(t,n)\setminus \cJ$ is a strictly monotone sequence tending to $t$ satisfying $\be_M({s_k})<\be_*(t)=\lim_{k\to\infty} \be_M(s_k)$. Therefore, Lemma \ref{scs} implies
\begin{equation}
1\ge D_M(t,h) = \limsup_{s\rightarrow t} D_M(s,h) \ge \limsup_{k\rightarrow +\infty} D_M(s_k,h) = h\be_*(t)=1. \nonumber
\end{equation}
If $\mathsf{b}_*$ is locally constant equal to $\be_*(t)$ in its domain around $t$, then for $n$ large enough,  $E_M(h)\cap I_*(t,n) = \{s\in I_*(t,n) : \de_s = \frac{\be_*(t)}{\beta_M(s)}= 1 \}$. An application of Theorem \ref{bs} to the constant function $f(x)\equiv 1$ yields 
$\dim_\cH(E_M(h)\cap I_*(t,n)) = 1$ for all $n$ large enough. Thus, $D_M(t,h)=1$.
 

\section{Examples}

\subsection{Variable order stable-like processes}\label{example1}
In 1988, R. Bass \cite{bass1988uniqueness} has shown the uniqueness in law  of a class of pure jump Markov process with generator
\begin{align*}
\cL^{\be}f(x) = \int_\rr (f(x+u) - f(x)- uf'(x)\indiq_{|u|\le 1} ) \be(x)|u|^{-1-\be(x)}\rd u
\end{align*}
under very weak conditions ($\be$ Dini-continuous and ranging in a compact set of $(0,2)$), that he called variable-order stable-like processes. It is well-defined for all $f\in C^2_c(\rr)$.  Denote by $\cF f(\xi) = \int e^{-ix\xi}f(x)\,\rd x$ the Fourier transform of $f$. One has for all $f\in C^{\infty}_c(\rr)$, 
\begin{align*}
\cF(\cL f(x))(\xi) = \int_{\rr_*} (e^{iu\xi} -1 - iu\xi\indiq_{|u|\le 1})\be(x)|u|^{-1-\be(x)}\rd u \cF f(\xi)
\end{align*}
Note that the right-hand side integral is the L\'evy-Khintchine representation of a certain symmetric $\be(x)$-stable distribution, so that $\cL^\be$ is a pseudo-differential operator with variable-order symbol that is close to $|\xi|^{\be(x)}$. By a variable change $u=\mbox{sign}(z)|z|^{1/\be(x)}$, the operator $\cL^\be$ is
\begin{align*}
\cL^\be f(x) = \int_\rr (f(x+\mbox{sign}(z)|z|^{1/\be(x)}) - f(x)- \mbox{sign}(z)|z|^{1/\be(x)}f'(x)\indiq_{|z|\ge 1} ) z^{-2}\rd z. 
\end{align*}
It is easy to verify by It\^o's formula that any solution $M$ to \eqref{jd} with coefficients 
\begin{equation}\label{coef}
 \sigma=b=0 \ \ \mbox{ and } \ \  G(x,z)=F(x,z)=\mbox{sign}(z)|z|^{1/\be(x)}
\end{equation}
 solves the martingale problem associated with $\cL^\be$ on $C^2_c(\rr)$.  Let $\be$ be Lipschitz continuous that ranges in a compact set of $(0,2)$,  the conditions (H1)-(H4) are satisfied so that the  SDE \eqref{jd} has a unique strong solution which is a strong Markov process.   By the uniqueness of the martingale problem (\cite[Theorem 2.2]{bass1988uniqueness}), $M$ is a variable-order stable-like process associated with $\be$. Observe that one has $b=\wt G = 0$ so that the condition (H5) is automatically satisfied without further regularity  assumption on $\be$. Hence, our results hold for the class of variable order stable-like processes with Lipschitz continuous $\be$ that ranges in a compact set of $(0,2)$.

\subsection{SDE driven by stable L\'evy processes}
\label{example2}

Recently, there has been much interest in SDE driven by stable L\'evy processes, see \cite{bass2003stableSDE,fu2010positiveSDE, li2011strongsolution, fournier2013pathuniquenessofstableSDE}. Our results can be applied to deduce multifractal nature of non-degenerate stable-driven SDEs. 

Recall that any symmetric $\al$-stable L\'evy process can be written as
\begin{align*}
Z_t = 
\begin{cases}
\int_0^t\int_{\rr_*} \mbox{sign}(z)|z|^{1/\al} \wt N(\rd s,\rd z) & \mbox{ if } 1<\al<2 \\
\int_0^t\int_{\rr_*} \mbox{sign}(z)|z|^{1/\al} N(\rd s,\rd z) & \mbox{ if } 0<\al<1
\end{cases}
\end{align*}
where $N$ is a Poisson measure whose intensity  $c\rd t\,\rd z/z^2$ for some positive finite constant $c$. Assume without loss of generality that $c=1$.
Note that the stable driven SDE
\begin{align*}
X_t = X_0 + \int_0^t g(X_{s-}) \,\rd Z_s + \int_0^{t} b(X_s)\,\rd s
\end{align*}
can be written as
\begin{align*}
X_t = \begin{cases}
x + \int_0^t g(X_{s-})\mbox{sign}(z)|z|^{1/\al} \,\wt N(\rd s, \rd z) + \int_0^{t} b(X_s)\,\rd s & \mbox{ if } 1<\al<2, \\
x + \int_0^t g(X_{s-})\mbox{sign}(z)|z|^{1/\al} \, N(\rd s, \rd z) + \int_0^{t} b(X_s)\,\rd s & \mbox{ if } 0<\al<1.
\end{cases}
\end{align*}
Let $g:\rr\to\rr$ be bounded above and bounded below away from zero in absolute value.  Let $b$ and $g$ be sufficiently smooth such that (H5) holds. Then the conditions (H1)-(H5) are satisfied with $G(x,z)=g(x)\mbox{sign}(z)|z|^{1/\al}$ and $\be(x)=\al$. Our results establishes that $X$ is homogeneously multifractal and its spectrum is the same as the driving process $Z$.   It is possible to extend the result to $g$ unbounded using some localization argument.  
%
%

\appendix
\section{Auxiliary results}
\begin{preuve}{\it \ of Proposition \ref{strongsolution} : }
For each $m\in\nn^*$, consider distinct $|x|,|y|\le m$. Assume without loss of generality that $\ln |G(x,z)| > \ln |G(y,z)|$, then there is a finite constant $c_m$ so that 
\begin{align}\label{lipofG}
|G(x,z) - G(y,z)| &=  |G(x,z)| \( 1- e^{\ln |G(y,z)| - \ln |G(x,z)|}\) \nonumber \\ \nonumber
&\le |G(x,z)| (\ln |G(x,z)| - \ln |G(y,z)|) \\
&\le  c_m |x-y| |G(x,z)| (\ln |z|) 
\end{align}
where we used the inequality $1-e^{-u}\le u$ for all $u>0$ and our local Lipschitz condition on $G$. To conclude, it remains to show that 
\begin{align*}
 \int_{|z|\le 1}  |G(x,z)|^2 (\ln |z|)^2 \,\pi(\rd z)
\end{align*}
 is bounded above by a finite constant, uniformly for $|x|\le m$. 

Let $\e_0 = 2- \sup\{\be(x): x\in\rr\}$ and $0<\e<2/(2-\e_0/2) -1$.  By the asymptotically stable-like assumption (H4), $\e_0>0$ and there is a $r(\e_0)>0$ so that 
\begin{align*}
\int_{|z|\le r(\e_0)} |G(x,z)|^2 (\ln |z|)^2 \,\pi(\rd z) &\le \int_{|z|\le r(\e_0)} |z|^{2/(\be(x)+\e_0/2) -2} (\ln |z|)^2 \,\rd z \\
&\le \int_{|z|\le r(\e_0)} |z|^{\e -1} (\ln |z|)^2 \,\rd z
\end{align*}
Replace $r(\e_0)$ by a smaller number if necessary to ensure $|z|^{\e/2}(\ln|z|)^2\le 1$ for all $|z|\le r(\e_0)$,  then this integral is bounded above by  $(4/\e)r(\e_0)^{\e/2}$.  On the other hand, the linear growth condition (H1) implies that  for some universal $K$,  each $x\in\rr$ satisfies
\begin{align*}
\int_{r(\e_0)<|z|\le 1} |G(x,z)|^2 (\ln |z|)^2 \,\pi(\rd z) \le K(\ln|r(\e_0)|)^2(1+x^2)
\end{align*}
which ends the proof. 
\end{preuve}

\sk
\begin{preuve}{\it \ of Lemma \ref{techlemma} :}
The first item is obvious.  For the second item, there is nothing to prove if $H_g(x_0)=0$. Suppose $k< H_g(x_0)\le k+1$ for some $k\in\nn$, in particular $g$ is continuous on $x_0$. By assumption, $f\in C^{k+1}(\rr)$. Hence there is a polynomial $P_f$ of degree at most $k$ so that 
$$|f(y)- f(y_0) - P_f(y-y_0)| = O(|y-y_0|^{k+1} ) $$
as $y$ tends to $y_0=g(x_0)$.  Denote by $P_g$ the polynomial of degree at most $k$ such that 
$$|g(x)- g(x_0) - P_g(x-x_0)| = o(|x-x_0|^{H_g(x_0)-\e})$$
as $x$ tends to $x_0$, for any fixed $\e>0$. Write 
\begin{align*}
P_f\circ P_g(h) = P(h) + P'(h)
\end{align*}
where $P$ is a polynomial of degree $k$ and $P'$ is a polynomial of order $O(h^{k+1})$ as $h\to 0$.  One has 
\begin{align*}
|f(g(x)) - f(g(x_0)) - P(x-x_0)| &\le |f(g(x))-f(g(x_0)) - P_f(g(x)-g(x_0))| \\
&+ |P_f(g(x)-g(x_0))- P_f(P_g(x-x_0))| \\
&+ |P_f(P_g(x-x_0)) - P(x-x_0)| \\
&= O(|g(x)-g(x_0)|^{k+1}) + o(|x-x_0|^{H_g(x_0)-\e}) + O(|x-x_0|^{k+1})
\end{align*}
as $|x-x_0|\to 0$.  The first term is of order $O(|x-x_0|^{k+1})$, thus the above sum is of order $o(|x-x_0|^{H_g(x_0)-\e})$. Letting $\e\to 0$ ends the proof. 
\end{preuve}

\sk
\begin{preuve}{\it \ of Lemma \ref{techlemma2} : } 
Let $x\in D_m$.  By (H4), there is a constant $r_m>0$ such that 
\begin{align*}
\int_{|z|\le r_m} |G(x,z)|\,\pi(\rd z) \le \int_{|z|\le r_m} |z|^{\frac{1}{\be(x)+1/(2m)} -2} \,\rd z \le \int_{|z|\le r_m} |z|^{\frac{1}{1-1/2m} -2} \,\rd z := c^1_m<+\infty.
\end{align*}  
Applying Cauchy-Schwartz inequality and the growth condition (H1) implies  
\begin{align*}
\int_{r_m<|z|\le 1} |G(x,z)|\,\pi(\rd z) \le \pi(\{z: r_m<|z|\le 1\})^{1/2} \times K(1+m^2)^{1/2} := c^{2}_m < +\infty.
\end{align*}
The first property is proved.  

Now we turn to the $L^1$ local Lipschitz condition for $G$ on the domain $D_m$.  Let $x,y\in D_m$. In the light of \eqref{lipofG}, it suffices to prove that the integral
\begin{align*}
\int_{|z|\le 1} |G(x,z)| \ln(1/z) \,\pi(\rd z)
\end{align*} 
is bounded above by a constant, uniformly for all $x\in D_m$. That $x$ belongs to $D_m$ and (H4) makes the integral over $\{z: |z|\le r_m\}$ finite uniformly for $x\in D_m$. On the other hand, one uses Cauchy-Schwartz inequality to bound above the integral over the set $\{z: r_m<|z|\le 1\}$, which has finite $\pi$-measure.
\end{preuve}

\section{Pointwise exponent of the Brownian integral}

Recall the martingale representation theorem.
\begin{theorem*}[Dambis-Dubins-Swartz,   Th. 5.1.6  \cite{revuzyor1999}] 
Let $\cM$ be a $(\cF_t,\mathbb{P})$-continuous local martingale   such that a.s. $\cM_0=0$ and $\langle \cM \rangle_{+\infty} = +\infty$. Let 
$$T_t=\inf\{s\ge 0 : \langle \cM \rangle_s > t \},$$
then $B_t := \cM_{T_t}$ is a $(\cF_{T_t})$-Brownian motion and a.s.  $\forall\, t\in\rr^+,\, \cM_t = B_{\langle M\rangle_t}$.

\end{theorem*}

\begin{preuve}{\it \ of Proposition \ref{difexp} :} 
Recall that $\calX_t=\int_0^t \sigma(M_s)\,dB_s$ is a local martingale starting from $0$. The quadratic variation process of $\calX$  $$\langle \calX \rangle_t = \int_0^t \sigma(M_s)^2 \,ds,$$
  satisfies $\langle \calX\rangle_{\infty}=\infty$ almost surely, since $\sigma$ stays away from $0$ by assumption. Applying Theorem of Dambis-Dubins-Swartz to $\calX$, one can find a standard Brownian motion $\widetilde{B}$ on $(\cF, \pp)$ such that a.s. $\forall\, t,\, \calX_t = \widetilde{B}_{\langle \calX \rangle_t}$. 

First computation yields a.s. for every $t\in\rr^+$, $\forall\,r>0$, for all $u\in B(t,r)$,
 \begin{eqnarray}\label{bilip1}
c|u-t| \le \lba \langle \calX \rangle_u - \langle \calX \rangle_t \rba &\le& \lba \int_t^u C (1+|M_s|)^2 ds \rba 
\le    C |u-t| \label{eqsigma}, 
\end{eqnarray}
 where we used   that $\sigma$ stays away from $0$ to find the constants $c,\,C\in\rr^+_*$.

By L\'evy's modulus of continuity for Brownian motion (Theorem 1.2.7 of \cite{revuzyor1999}), for every $ \ep>0$, a.s. for every $t$, for $u$ sufficiently close to $t$,  one has by \eqref{eqsigma} \begin{eqnarray}
|\calX_u-\calX_t| = |\widetilde{B}_{\langle \calX \rangle_u} - \widetilde{B}_{\langle \calX \rangle_t}|
\le C' |\langle \calX \rangle_u - \langle \calX \rangle_t|^{\frac{1}{2} - \ep} 
\le C' |u-t|^{\frac{1}{2} - \ep}.\nonumber
\end{eqnarray}
 Hence, almost surely, $\forall\,t,\, H_{\calX}(t) \ge \frac{1}{2} - \ep$. 

On the other hand, Dvoretzky \cite{dvoretzky1963irregularityofBM} proved that, for a standard Brownian motion $B$, there exists a constant $K>0$, such that almost surely $$\forall\,t,\quad \limsup_{h\to 0^+} \frac{|B_{t+h}-B_t|}{h^{1/2}}\ge K.$$
Applying Dvoretzky's Theorem to our Brownian motion $\widetilde{B}$, we get  that almost surely for every $t\ge 0$, there exists a positive sequence $(h_n)_{n\geq 1}$ converging to zero  such that 
\begin{eqnarray}\label{bilip2}
|\widetilde{B}_{\langle \calX \rangle_t + h_n}-\widetilde{B}_{\langle \calX \rangle_t}|\ge K|h_n|^{1/2}.
\end{eqnarray}
As $t\mapsto \langle \calX \rangle_t$ is  a strictly increasing (always by the assumption that $\sigma$ stays away from $0$) continuous function, there exists a sequence $(u_n)_{n\geq 1}$ such that $\langle \calX \rangle_t + h_n = \langle \calX \rangle_{u_n}$. By the first inequality of \eqref{bilip1}, \begin{equation}\label{bilip3}
|h_n|\ge c|u_n-t|.
\end{equation}
It follows form \eqref{bilip2} and  \eqref{bilip3} that 
\begin{align}
|\calX_{u_n}-\calX_t| &= |\widetilde{B}_{\langle \calX\rangle_{u_n}} - \widetilde{B}_{\langle \calX\rangle_t}| = |\widetilde{B}_{\langle \calX \rangle_t + h_n} - \widetilde{B}_{\langle \calX \rangle_t}|  \ge Kc|u_n-t|^{1/2} \nonumber,
\end{align}
  This yields a.s.  $\forall\,t,\,H_\calX(t) \le 1/2$, and letting $\ep$ tend to $0$ gives the result.
\end{preuve}

\section{Existence of tangent processes}

In order to describe the local structure of stochastic processes which are often rough (not differentiable), several authors consider the tangent processes associated with them, see for instance \cite{falconer2003tangentprocesses}. Precisely, given a stochastic process $X$ and $t_0$ a fixed time, one wonders if there exist two sequences $(\al_n)_{n\geq 1}$, $(r_n)_{n\geq 1}$ decreasing to zero such that the sequence of process $( r_n (X_{t_0+\al_nt} - X_{t_0}) ) _{t\ge 0}$ converges in law to some limit process $(Y_t)_{t\ge 0}$, and call it, if exists, a tangent process.  One observes in Theorem \ref{mainwithdif} and Theorem \ref{mainresult} that the pointwise spectrum of the process $M$ looks like (but not exactly) the spectrum of some L\'evy process. Then natural questions concern the connections between the pointwise spectrum of the process at $t_0$ and its tangent process  at this point. In the stable-like case, we show the existence of tangent processes of $M$, which are some stable Lévy processes.   
Their spectra coincide with the pointwise  spectra of $M$ at time $t$ except for one value of $h$. Here, the scaling $(r_n, \al_n$) must be carefully chosen and  plays an important role.

Throughout this section, the Skorokhod space of c\`adl\`ag functions on $[0,1]$ is endowed with the uniform convergence topology. 
We consider the function $G_0  (x,z) = \mbox{sign}(z)|z|^{1/\be(x)}$ with $\be$ Lipschitz continuous and $\overline{\mbox{ Range } \be} \subset (0,2)$, and the pure jump diffusion still denoted by $M$:
\begin{align*}
M_t = \int_0^t\int_{|z|\le 1} G_0(M_{\sm},z)\,\wn(\rd s, \rd z).
\end{align*}

\begin{proposition}\label{tangentexample} Let $t_0\ge 0$ be fixed, conditionally on $\cF_{t_0}$, the family of processes $\( \frac{M_{t_0+\al t}-M_{t_0}}{\al^{1/\be({t_0})}}\) _{t\in[0,1]}$ converges in law to a stable L\'evy process with L\'evy measure $\be_M({t_0})u^{-1-\be_M( {t_0})}\,du$, when $\al\to 0$.
\end{proposition}

The next lemma gives some moment estimate for $M$ near $0$. The second point was proved in \cite{barral2010increasing}, we still prove it for the sake of completeness. Let us introduce the stopping times  for every $\eta>0$
\begin{equation}
\tau_\eta := \inf \{ t>0 : \beta_M(t)> \be(0) + \eta \} \nonumber.
\end{equation}


\begin{lemma} \label{momentofjump0} Let $\eta>0$ be small.  
\begin{itemize}
\item[(i)] If $\beta(0)\ge 1$, for every $\gamma\in(\beta(0)+\eta,2)$, there exists a constant $c_\gamma$ such that $\forall\, \al>0$, 
\begin{equation}
\E[|M_{\al\wedge\tau_\eta}|^\gamma] \le c_\gamma \al. \nonumber
\end{equation}
\item[(ii)] If $\beta(0)<1$, for every $\gamma\in(\beta(0)+\eta,1\wedge 2\beta(0))$, the same moment inequality holds. 
\end{itemize}
\end{lemma}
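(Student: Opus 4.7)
The plan is to handle (i) and (ii) separately, but in both cases the workhorse is the uniform pointwise bound
\[
|G_0(\calM_{s-},z)|^{\gamma} = |z|^{\gamma/\beta(s-)} \leq |z|^{\gamma/(\widetilde\beta(0)+\eta)}, \qquad s\le \tau_\eta,\ z\in C(0,1),
\]
(valid because $\beta(s-)\le \widetilde\beta(0)+\eta$ up to the stopping time and $|z|\le 1$), together with the fact that
\[
\int_{C(0,1)} |z|^{\gamma/(\widetilde\beta(0)+\eta)}\,\pi(dz) < +\infty
\]
as soon as $\gamma>\widetilde\beta(0)+\eta$, which is precisely the lower endpoint assumed in both cases.

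For part (i), since $\widetilde\beta(0)\ge 1$ we automatically have $\gamma>1$, so I would appeal to the Bichteler--Jacod (or Novikov) inequality for purely discontinuous $L^\gamma$-martingales with $\gamma\in[1,2]$: for any predictable $F$,
\[
\E\Bigl[\sup_{u\le t}\Bigl|\int_0^u\!\!\int F(s,z)\,\wn(ds\,dz)\Bigr|^{\gamma}\Bigr] \le C_\gamma\,\E\Bigl[\int_0^t\!\!\int|F(s,z)|^{\gamma}\,\pi(dz)\,ds\Bigr].
\]
Applying this to $F(s,z)=G_0(\calM_{s-},z)\indiq_{s\le\tau_\eta}$ and using the uniform bound above, the right-hand side is dominated by $C_\gamma\,\alpha\cdot\int|z|^{\gamma/(\widetilde\beta(0)+\eta)}\pi(dz)\le c_\gamma\alpha$.

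For part (ii), the Bichteler--Jacod inequality is no longer available because $\gamma<1$. The idea is to use the odd symmetry $G_0(x,-z)=-G_0(x,z)$ to decompensate: when $\eta$ is chosen so that $\widetilde\beta(0)+\eta<1$, one checks that $\int_{C(0,1)}|G_0(x,z)|\pi(dz)<\infty$ for every $x$ taken by $\calM$ before $\tau_\eta$, hence the compensator integral is well defined pathwise and vanishes by symmetry. Consequently
\[
\calM_{\alpha\wedge\tau_\eta} \;=\; \sum_{T_n\le\alpha\wedge\tau_\eta} G_0(\calM_{T_n-},Z_n).
\]
I then use the subadditivity $|\sum a_n|^\gamma\le\sum|a_n|^\gamma$, valid since $\gamma\le 1$, and the Campbell/compensation formula for non-negative predictable integrands:
\[
\E\bigl[|\calM_{\alpha\wedge\tau_\eta}|^{\gamma}\bigr] \le \E\Bigl[\sum_{T_n\le\alpha\wedge\tau_\eta}|G_0(\calM_{T_n-},Z_n)|^{\gamma}\Bigr] = \E\Bigl[\int_0^{\alpha\wedge\tau_\eta}\!\!\int|G_0|^{\gamma}\,\pi(dz)\,ds\Bigr] \le c_\gamma\,\alpha.
\]

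The main technical obstacle is case (ii): one must carefully justify that $\widetilde\beta(0)+\eta<1$ allows decompensation, and invoke the symmetry of $G_0$ to kill the resulting drift — this is exactly the role of the assumption $\widetilde\beta(0)<1$. The seemingly mysterious upper bound $\gamma<2\widetilde\beta(0)$ in (ii) does not enter the subadditivity argument itself (which only needs $\gamma<1$ and $\gamma>\widetilde\beta(0)+\eta$) but is a comfortable safety margin to ensure the small-$z$ integrability is non-critical.
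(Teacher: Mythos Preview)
Your proposal is correct and follows essentially the same route as the paper: for (i) the paper uses BDG on the quadratic variation $[\calM]=\int\!\!\int |G_0|^2\,N$ and then the subadditivity of $x\mapsto x^{\gamma/2}$ on the jump sum, which is exactly the content of the Bichteler--Jacod/Novikov inequality you invoke; for (ii) the paper likewise decompensates $\wn=N-\pi$ and uses $\gamma<1$ subadditivity on the Poisson sum, the drift part being harmless precisely because of the odd symmetry you exploit. Your observation that the upper bound $\gamma<2\widetilde\beta(0)$ plays no role in the lemma itself is also correct---in the paper that constraint is only used downstream (in \eqref{cheby2*}) to force $\alpha^{2-\gamma/\widetilde\beta(0)}\to 0$.
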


\begin{proof}
(i) Since $M$ is a  martingale, by Burkholder-Davis-Gundy inequality and subadditivity, one has 
\begin{eqnarray}
\E[|M_{\al\wedge\tau_\eta}|^\gamma] &\le& \E[\sup_{0\le t\le \al\wedge\tau_\eta} |M_t|^\gamma] \ala 
&\le& c_\gamma \E \[ \lba \int_0^{\al\wedge\tau_\eta} \int_{|z|\le 1} |G_0(M_{s-},z)|^2 N(dsdz)\rba ^{\gamma/2} \] \ala
&\le& c_\gamma \E \[ \int_0^{\al\wedge\tau_\eta} \int_0^1 |G_0(M_{s-},z)|^\gamma \,N(dsdz) \] \ala
&=& c_\gamma \E \[ \int_0^{\al\wedge\tau_\eta} \int_0^1 |G_0(M_{s-},z)|^\gamma \,dz/z^2ds \] \nonumber
\end{eqnarray} 
For every $s\in[0,\tau_\eta)$, one has
\begin{eqnarray}
\ \int_0^1 \hspace{-2mm}|G_0(M_{s-},z)|^\gamma dz/z^2 \!=\! \int_0^1 \hspace{-2mm}|z|^{\gamma/\beta_M(s-)} dz/z^2  \!\le\! \int_0^1 |z|^{\gamma/(\beta(0)+ \eta)} dz/z^2 \!<\! +\infty, \nonumber 
\end{eqnarray}
where we used that $\gamma> \beta(0)+\eta$. Hence, $$\E[|M_{\al\wedge\tau_\eta}|^\gamma] \le c_\gamma \E\[ \int_0^{\al\wedge\tau_\eta}\,ds\] \le c_\gamma\al.$$

(ii) For every $s\in[0,\tau_\eta)$ with $\eta$ small enough, it makes sense to separate the compensated Poisson integral, see Lemma \ref{fivar}. Using $(a+b)^\gamma\le (a^\gamma+b^\gamma)$ for all $(a,b)\in\rr^2_+$, $\ga\le 1$ and integral type subadditivity, one has
\begin{eqnarray}
\E[|M_{\al\wedge\tau_\eta}|^\gamma] &\le& \E \[ \lba\int_0^{\al\wedge\tau_\eta} \int_{|z|\le } |G_0(M_{s-},z)| N(dsdz)\rba ^{\gamma} \] \ala
&& \ \ \ \ \ + \,\E \[ \lba\int_0^{\al\wedge\tau_\eta} \int_{|z|\le 1} |G_0(M_{s-},z)| \,dz/z^2ds \rba ^{\gamma} \] \ala
&\le& c_\gamma \E \[ \int_0^{\al\wedge\tau_\eta} \int_0^1 |G_0(M_{s-},z)|^\gamma \,dz/z^2ds \] . \nonumber
\end{eqnarray}
Repeating the arguments of the first point yields the result.
\end{proof}

\begin{lemma}\label{nonss0} Let $x_0$ be fixed. For all $\gamma>\be(x_0)$, there exist strictly positive constants $C_{\gamma}$ and $\delta$ such that  for all $  x\in B(x_0,\de)$
$$\int_{|z|\le 1}|G_0(x,z)-G_0(x_0,z)|^{\gamma}\,\pi(dz) \le C_{\gamma}|x-x_0|^\gamma.$$
\end{lemma}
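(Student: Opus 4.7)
The plan is to reduce the problem to a one‐sided integral, apply a Mean Value Theorem on the exponent, and then use the Lipschitz regularity of $\widetilde\beta$ together with a carefully chosen $\delta$ that makes the resulting integral converge.

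First, by the symmetry $G_0(x,z)=\operatorname{sign}(z)|z|^{1/\widetilde\beta(x)}$, the integrand depends only on $|z|$, so I would rewrite
\[
\int_{C(0,1)}|G_0(x,z)-G_0(x_0,z)|^{\gamma}\,\pi(dz)
=2\int_0^1 \bigl|z^{\alpha(x)}-z^{\alpha(x_0)}\bigr|^{\gamma}\,\frac{dz}{z^2},
\]
where I set $\alpha(y):=1/\widetilde\beta(y)$. Since $\widetilde\beta$ is Lipschitz and $\overline{\mathrm{Range}\,\widetilde\beta}\subset(0,2)$ stays bounded away from $0$, the map $\alpha$ is also Lipschitz, so there is $L>0$ with $|\alpha(x)-\alpha(x_0)|\le L|x-x_0|$.

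Next, for fixed $z\in(0,1)$ the function $a\mapsto z^a=e^{a\log z}$ satisfies $|\partial_a z^a|=z^a\log(1/z)$, so the Mean Value Theorem gives
\[
\bigl|z^{\alpha(x)}-z^{\alpha(x_0)}\bigr|
\le |\alpha(x)-\alpha(x_0)|\cdot \log(1/z)\cdot z^{\min(\alpha(x),\alpha(x_0))}.
\]
Combining with the Lipschitz bound on $\alpha$ and raising to the power $\gamma$ yields
\[
\bigl|z^{\alpha(x)}-z^{\alpha(x_0)}\bigr|^{\gamma}
\le L^{\gamma}|x-x_0|^{\gamma}\bigl(\log(1/z)\bigr)^{\gamma} z^{\gamma\min(\alpha(x),\alpha(x_0))}.
\]

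Finally, I would choose $\delta>0$ small enough so that $\widetilde\beta(x)<\gamma$ for every $x\in B(x_0,\delta)$; this is possible because $\widetilde\beta$ is continuous at $x_0$ and $\gamma>\widetilde\beta(x_0)$. Then for such $x$, $\gamma\min(\alpha(x),\alpha(x_0))=\gamma/\max(\widetilde\beta(x),\widetilde\beta(x_0))>1$, so there exists $\eta>0$ with $\gamma\min(\alpha(x),\alpha(x_0))-2\ge -1+\eta$. Consequently
\[
\int_0^1 \bigl(\log(1/z)\bigr)^{\gamma} z^{\gamma\min(\alpha(x),\alpha(x_0))-2}\,dz
\le \int_0^1 \bigl(\log(1/z)\bigr)^{\gamma} z^{-1+\eta}\,dz =: K_{\gamma,\eta}<+\infty,
\]
uniformly in $x\in B(x_0,\delta)$, and setting $C_\gamma := 2 L^\gamma K_{\gamma,\eta}$ concludes the proof.

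The only delicate point is the choice of $\delta$ ensuring the integrability near $z=0$: the naive hypothesis $\gamma>\widetilde\beta(x_0)$ is not enough by itself because the exponent appearing in the integral involves $\max(\widetilde\beta(x),\widetilde\beta(x_0))$, so one must use the continuity of $\widetilde\beta$ to shrink the ball around $x_0$. Everything else is a straightforward computation once the MVT bound on the exponent has been set up.
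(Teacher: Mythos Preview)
Your proof is correct. The paper omits the proof of this lemma (``It is easy to check Lemma \ref{nonss0}''), but your argument is exactly in the spirit of the analogous computation the paper carries out in Appendix~1 for the Lipschitz condition on $G$: there the bound $1-e^{-u}\le u$ plays the role of your Mean Value estimate, and the remaining integral $\int_0^1 |G(x,z)|^2(\log z)^2\,dz/z^2$ is controlled in the same way you control $\int_0^1(\log(1/z))^\gamma z^{-1+\eta}\,dz$. Your observation that $\delta$ must be chosen via the continuity of $\widetilde\beta$ to guarantee a \emph{uniform} $\eta>0$ is the one point worth stating explicitly, and you have done so.
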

It is easy to check Lemma \ref{nonss0}. Now we prove Proposition \ref{tangentexample}, using the self-similarity of the limit process and last two lemmas.

\sk
\begin{proof} 
By  the Markov property, it is enough to prove the proposition for $t_0=0$. Let us introduce  
$$\cL_t=\int_0^t\int_{|z|\le 1}G_0(0,z)\wn(\rd s, \rd z), \ \ \mathcal{S}_t = \int_0^t\int_{\rr^*} G_0(0,z) \wn(\rd s, \rd z).$$
Note that $\cL$ and $\mathcal{S}$ are pure jump Lévy processes whose Lévy measure are $\beta(0)|z|^{-\beta(0)-1}\indiq_{|z|\le 1}dz$ and $\beta(0)|z|^{-\beta(0)-1}dz$, respectively. An application of It\^o formula shows that $\mathcal{S}$ is a  symmetric $\beta(0)$-stable L\'evy process, thus is $1/\beta(0)$-self-similar, meaning that for every $ \al>0$,
 \begin{align*}
 \( \al^{-1/\beta(0)}\mathcal{S}_{\al t} \) _{t\in[0,1]} = \( \mathcal{S}_t \) _{t\in[0,1]}
 \end{align*}
in law, see for instance Chapter 3 of Sato \cite{sato2013}. Observe that  $\forall\, \de>0$,
\begin{eqnarray}
\pp\( \sup_{0\le t\le 1} \lba \al^{-1/\beta(0)}(\calL_{\al t} - \mathcal{S}_{\al t}) \rba \le\de \) \ge \pp \( N([0,\al]\times \{z:|z|>1\})=0 \) =e^{-\al}  \to_{\al\downarrow 0} 1 \nonumber.
\end{eqnarray}
This computation yields that $$\al^{-1/\beta(0)}\sup_{t\in[0,1]}|\calL_{\al t}-\mathcal{S}_{\al t}| \to 0$$
in probability, when $\al\to 0$. Recall that the self-similarity of $\mathcal{S}$ ensures that  $(\al^{-1/\beta(0)}\mathcal{S}_{\al t})_{t\in[0,1]}$ converges (equals) in law to $(\mathcal{S}_{t})_{t\in[0,1]}$, thus the process $(\al^{-1/\beta(0)}\calL_{\al t})_{t\in[0,1]}$ converges in law to $(\mathcal{S}_t)_{t\in[0,1]}$. To conclude, it remains to prove the following
\begin{eqnarray}
\al^{-1/\beta(0)}\Delta_\al \to 0 \mbox{ in probability}, \nonumber
\end{eqnarray}
where $\Delta_\al := \sup_{0\le t\le \al} |M_t-\calL_t|$.
There are two cases.

\medskip
\noindent{\bf Case 1 : $\beta(0)\ge 1$.} Applying the Burkholder–Davis-Gundy inequality and a subadditivity property, one has, for every $\gamma\in(\beta(0)+\eta,2)$,
\begin{eqnarray}
\E[|\Delta_{\al\wedge\tau_\eta}|^\gamma] &\le& c_\gamma \E\[ \int_0^{\al\wedge\tau_\eta}\int_{C(0,1)} |G_0(M_{s-},z)-G_0(0,z)|^\gamma \,dz/z^2ds \] \ala
&\le& c_\gamma \E\[ \int_0^{\al\wedge\tau_\eta} |M_s|^\gamma ds \] \le c_\gamma \int_0^{\al} \E[|M_{s\wedge\tau_\eta}|^\gamma] \,ds \le c_\gamma \al^2,\nonumber
\end{eqnarray}
where we used Lemma \ref{nonss0} and Lemma \ref{momentofjump0}. Hence, for every $ \de>0$, one has
\begin{eqnarray}\label{cheby*}
\pp \( \al^{-1/\beta(0)}\Delta_\al \ge \de \) \le \pp \( \tau_\eta \le \al \) + \pp \( \al^{-1/\beta(0)}\Delta_{\al\wedge\tau_\eta} \ge \de \) ,
\end{eqnarray}
where 
$\lim_{\al\downarrow 0+} \pp (\tau_\eta\le \al) = \pp (\tau_\eta = 0) =0$ and
\begin{equation}\label{cheby2*}
\pp \( \al^{-1/\beta(0)}\Delta_{\al\wedge\tau_\eta} \ge \de \) \le \de^{-\gamma}\al^{-\gamma/\beta(0)} \E[|\Delta_{\al\wedge\tau_\eta}|^\gamma] \le c_{\de,\gamma} \al^{2-\gamma/\beta(0)} \to 0,
\end{equation} since $2\beta(0)\ge 2> \gamma > \beta(0)+\eta$.

\sk
\noindent{\bf Case 2 : $\beta(0)<1$. } As in Lemma \ref{momentofjump0}, for every $s\in[0,\tau_\eta)$ with $\eta$ small enough, it makes sense to separate the compensated Poisson measure. By subadditivity, for every $\gamma\in(\beta(0)+\eta, 1\wedge 2\beta(0))$,
\begin{eqnarray}
\E[|\Delta_{\al\wedge\tau_\eta}|^\gamma] &\le& c_\gamma \E\[ \lba \int_0^{\al\wedge\tau_\eta} \int_{C(0,1)} |G_0(M_{s-},z)-G_0(0,z)| \,N(dsdz) \rba ^\gamma \] \ala
&& +\, c_\gamma \E\[ \lba \int_0^{\al\wedge\tau_\eta}\int_{C(0,1)} |G_0(M_{s-},z)-G_0(0,z)| \,dz/z^2ds \rba ^\gamma \] \ala
&\le& c_\gamma \E\[ \int_0^{\al\wedge\tau_\eta}\int_{C(0,1)} |G_0(M_{s-},z)-G_0(0,z)|^\gamma \,dz/z^2ds \] \ala
&\le& c_\gamma \E\[ \int_0^{\al\wedge\tau_\eta} |M_{s}|^\gamma \,ds \] \le c_\gamma \int_0^\al \E[|M_{s\wedge\tau_\eta}|^\gamma]\,ds \le c_\gamma \al^2, \nonumber
\end{eqnarray}
where we used again Lemma \ref{nonss0} and Lemma \ref{momentofjump0}. Repeating the computations \eqref{cheby*}, \eqref{cheby2*} and using $\gamma\in(\beta(0)+\eta, 1\wedge 2\beta(0))$ yield the result.
\end{proof}

\section*{Acknowledgements}
This work is part of my phd thesis. I wish to thank my advisors St\'ephane Jaffard and St\'ephane Seuret for suggesting this study and for their constant support during the preparation of this paper. I also would like to thank Nicolas Fournier and Arnaud Durand for many stimulating  discussions.   Finally, I appreciate many valuable remarks of the anonymous referee which improve a lot the presentation and weaken some conditions.

\bibliographystyle{plain}
\bibliography{xyangbiblio}

\end{document}